\newtheorem{theorem}{Theorem}[section]
\newtheorem{lemma}[theorem]{Lemma}
\newtheorem{corollary}[theorem]{corollary}
\theoremstyle{definition}
\theoremstyle{remark}
\newtheorem{remark}[theorem]{Remark}
\numberwithin{equation}{section}
\begin{document}

\title{On compact manifolds with harmonic curvature and positive scalar curvature}

\author{Hai-Ping Fu}
\address{Department of Mathematics,  Nanchang University, Nanchang, P.
R. China 330031}
\email{mathfu@126.com}
\thanks{Supported by National Natural Science Foundations of China \#11261038 and \#11361041,  Jiangxi Province
Natural Science Foundation of China \#20132BAB201005.}


\subjclass[2000]{Primary 53C21; Secondary 53C20}



\keywords{Einstein manifold, harmonic curvature, rigidity}

\begin{abstract}
Let $M^n(n\geq3)$ be an $n$-dimensional   compact Riemannian manifold  with harmonic curvature and  positive scalar curvature. Assume that $M^n$ satisfies some integral pinching conditions. We give some rigidity theorems on compact manifolds with harmonic curvature and positive scalar curvature. In particular, Theorem 1.4, Corollary 1.6 and Theorem 1.9 are sharp for our conditions have the additional properties of being sharp. By this we mean that we can precisely characterize the case of equality.

\end{abstract}

\maketitle

\section{Introduction and main results}
Recall that an $n$-dimensional Riemannian manifold $(M^n, g)$  is said to be  a manifold with harmonic curvature
if the divergence of its Riemannian curvature tensor $Rm$ vanishes, i.e., $\delta Rm=0$.
In view of the second Bianchi identity, we know that $M$ has harmonic
curvature if and only if the Ricci tensor of $M$ is a Codazzi tensor. When $n\geq 3$, by the Bianchi identity, the scalar curvature is constant. Thus, every Riemannian manifold with parallel Ricci tensor has harmonic
curvature. Moreover, the constant curvature spaces, Einstein manifolds and the locally conformally
flat manifolds with constant scalar curvature are
also important examples of manifolds with harmonic curvature, however, the
converse does not hold (see \cite{{B}}, for example). According to the decomposition of the Riemannian curvature tensor, the metric with harmonic curvature is a natural candidate for this study  since one of the important problems in Riemannian geometry is to understand classes
of metrics that are, in some sense, close to being Einstein or having constant curvature. The another reason for this study on the metric with harmonic curvature is the fact that a Riemannian manifold has harmonic curvature if and
only if the Riemannian connection is a solution of the Yang-Mills equations on
the tangent bundle \cite{Bo}. The complete manifolds  with
harmonic curvature have been studied in literature (e.g., \cite{{C},{Ca},{Ch},{D},{FX},{HV},{IS},{K},{PRS},{S},{TV},{XZ}}).
Some isolation theorems of Weyl curvature tensor of positive Einstein manifolds are given in \cite{{Ca2},{FX2},{FX3},{HV},{IS},{S}}, when its $L^{p}$-norm is small.
Some scholars \cite{{Ca},{HV},{PRS},{XZ}} classify  conformally flat  manifolds  satisfying an $L^{p}$-pinching condition on the curvature. Recently,
Xiao and the author \cite{FX} obtain some rigidities on complete manifolds with harmonic curvature satisfying an $L^{p}$-pinching condition
on trace-free Riemannian  curvature.
 The curvature pinching phenomenon plays an important role in global differential
geometry. We are interested in  $L^p$
pinching problems for compact Riemannian manifold  with harmonic curvature and  positive scalar curvature.

Now we  introduce the definition of the Yamabe constant. Given a compact  Riemannian $n$-manifold $M$, we consider the Yamabe functional
$$Q_g\colon C^{\infty}_{+}(M)\rightarrow\mathbb{R}\colon f\mapsto Q_g(f)=\frac{\frac{4(n-1)}{n-2}\int_M|\nabla f|^2\mathrm{d}v_g+\int_M Rf^2\mathrm{d}v_g}{(\int_M f^{\frac{2n}{n-2}}\mathrm{d}v_g)^{\frac{n-2}{n}}},$$
where $R$ denotes the  constant scalar curvature of $M$.
It follows that $Q_g$ is bounded below by H\"{o}lder inequality. We set
$$\mu([g])=\inf\{Q_g(f)|f\in C^{\infty}_{+}(M)\}.$$
This constant $\mu([g])$ is an invariant of the conformal class of $(M, g)$, called the Yamabe constant.
The important works of Schoen, Trudinger and Yamabe showed that the infimum in the above is always achieved (see \cite{{A},{LP}}).
The Yamabe constant of a given compact manifold
is determined by the sign of scalar curvature \cite{A}.

 Throughout this paper, we always assume
that $M$ is an $n$-dimensional complete  Riemannian manifold with $n\geq3$.
In this note, we obtain the following rigidity theorems.

\begin{theorem}
Let $M$  be an $n$-dimensional   compact Riemannian manifold  with harmonic curvature and  positive scalar curvature. Then
$$\int_M|\mathring{Ric}|^{\frac{n-2}{n}}[R-\sqrt{(n-1)n}|\mathring{Ric}|-\sqrt{\frac{(n-2)(n-1)}{2}}|W|]\leq 0$$
and equality occurs if and only if

i) $M$ is a Einstein manifold;

ii) $M$ is covered isometrically by $\mathbb{S}^1\times \mathbb{S}^{n-1}$ with the product metric;

iii) $M$ is covered isometrically by $(\mathbb{S}^1\times N^{n-1}, dt^2+F^2(t)g_N)$, where $(N^{n-1}, g_N)$is a
compact Einstein manifold with positive scalar curvature and $F$ is a non-constant, positive,
periodic function satisfying a precise ODE. This metric is called a rotationally symmetric Derdzi\'{n}ski metric in \cite{Ca}.
\end{theorem}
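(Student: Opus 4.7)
The plan is to combine a Derdzi\'nski-type Weitzenb\"ock identity for the trace-free Ricci tensor $E=\mathring{Ric}$ with sharp pointwise algebraic bounds and a single integration by parts that cancels the gradient terms. Since harmonic curvature forces $E$ to be a Codazzi tensor and $R$ to be constant, one has (via $\nabla_k R_{ij}=\nabla_i R_{kj}$ and one application of the Ricci identity)
\[
\tfrac{1}{2}\Delta|E|^{2} = |\nabla E|^{2} + \tfrac{R}{n-1}|E|^{2} - \tfrac{n}{n-2}\,\mathrm{tr}(E^{3}) - W_{ikjl}E^{ij}E^{kl}.
\]
I would then apply three sharp pointwise bounds: the Okumura-type inequality $|\mathrm{tr}(E^{3})| \leq \tfrac{n-2}{\sqrt{n(n-1)}}|E|^{3}$, the Huisken-type bound $|W_{ikjl}E^{ij}E^{kl}| \leq \sqrt{\tfrac{n-2}{2(n-1)}}\,|W|\,|E|^{2}$ (both sharp precisely when $E$ has eigenvalues of multiplicities $(1,n-1)$), and the refined Kato inequality for trace-free Codazzi tensors $|\nabla E|^{2} \geq \tfrac{n+2}{n}|\nabla u|^{2}$ with $u=|E|$. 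Inserting these bounds yields the pointwise differential inequality
\[
u\Delta u \geq \tfrac{2}{n}|\nabla u|^{2} + \tfrac{R}{n-1}u^{2} - \sqrt{\tfrac{n}{n-1}}\,u^{3} - \sqrt{\tfrac{n-2}{2(n-1)}}\,|W|\,u^{2}.
\]

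The key algebraic manoeuvre is to multiply by $u^{-(n+2)/n}$ and integrate: this exponent is chosen so that integration by parts gives $\int_M u^{-2/n}\Delta u = \tfrac{2}{n}\int_M u^{-(n+2)/n}|\nabla u|^{2}$, making the two gradient contributions cancel exactly. Zeros of $u$ are handled by replacing $u$ with $u+\varepsilon$ and letting $\varepsilon\to 0$, using that $u^{(n-2)/n}$ is integrable on the compact manifold $M$. Multiplying through by $n-1$ and rearranging reproduces precisely the claimed integral inequality.

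For the equality case, equality in each of the three pointwise bounds must hold everywhere. The two algebraic bounds force $E$ to have one eigenvalue of multiplicity $1$ and another of multiplicity $n-1$, and restrict the action of the Weyl tensor on $E$ accordingly; equality in the refined Kato then pins down $\nabla E$. If $u\equiv 0$ then $M$ is Einstein, giving case (i). Otherwise one invokes Derdzi\'nski's classification of harmonic-curvature metrics with Ricci eigenvalue structure $(1,n-1)$ to recognise $M$ as a rotationally symmetric warped product, giving case (iii), with case (ii) appearing as the locally conformally flat subcase. The principal obstacle is this last step: the pointwise rigidity is routine to track, but deducing the global warped-product structure requires invoking Derdzi\'nski's classification theorem, and one must also verify that the regularization at zeros of $u$ does not lose information when converting the equality in the integral inequality into pointwise equality in the Bochner, algebraic and Kato inequalities.
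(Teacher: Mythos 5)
Your proposal is correct and follows essentially the same route as the paper: the same Weitzenb\"ock formula for $\mathring{Ric}$, the same three sharp pointwise bounds (Okumura for $\mathrm{tr}(E^3)$, Huisken's Weyl estimate, refined Kato for Codazzi tensors), and the same exponent $\tfrac{n-2}{n}$ --- your multiplication by $u^{-(n+2)/n}$ followed by integration by parts is literally the identity $\Delta\bigl(u^{\frac{n-2}{n}}\bigr)=\tfrac{n-2}{n}\bigl(u^{-2/n}\Delta u-\tfrac{2}{n}u^{-(n+2)/n}|\nabla u|^{2}\bigr)$ that the paper integrates to zero over the compact $M$. The equality analysis also matches (DeTurck--Goldschmidt analyticity, de Rham splitting when $\mathring{Ric}$ is parallel, Derdzi\'nski's classification otherwise); the only small slip is that case (ii) arises as the parallel-Ricci (constant warping) alternative, not as a ``locally conformally flat subcase'' of (iii).
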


\begin{theorem}
Let $M$  be an $n$-dimensional   compact Riemannian manifold  with harmonic curvature and  positive scalar curvature.
Then
$$\int_M|\mathring{Rm}|[R-(n-1)C_1(n)|\mathring{Rm}|]\leq 0,$$
where $C_1(n)$ is defined in Lemma 2.1, and equality occurs if and only if $M$ is isometric to a quotient of the round $\mathbb{S}^n$.
\end{theorem}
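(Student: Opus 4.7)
The plan is to establish a pointwise Weitzenb\"ock/Bochner-type inequality for $|\mathring{Rm}|$ on a manifold with harmonic curvature, integrate it over $M$, and invoke the algebraic cubic-curvature estimate packaged in Lemma 2.1. This parallels the strategy for Theorem 1.1, but now $\mathring{Rm}$ plays the role that was split there between $\mathring{Ric}$ and $W$.

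First I would exploit the harmonic curvature hypothesis. Since $\delta Rm=0$, the second Bianchi identity forces $R$ to be constant, and in particular the trace-free Riemann tensor $\mathring{Rm}$ is itself divergence-free. Applying the standard Weitzenb\"ock/Lichnerowicz identity then yields a pointwise formula of the schematic form
\[
\tfrac{1}{2}\Delta|\mathring{Rm}|^2=|\nabla\mathring{Rm}|^2+\tfrac{R}{n-1}|\mathring{Rm}|^2+Q(\mathring{Rm}),
\]
where $Q(\mathring{Rm})$ is a trilinear curvature expression. The content of Lemma 2.1 is precisely the sharp pointwise bound $|Q(\mathring{Rm})|\le C_1(n)|\mathring{Rm}|^3$, and the coefficient $\tfrac{R}{n-1}$ is what is needed so that the final integral inequality comes out with the stated constant $(n-1)C_1(n)$.

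Next I would convert this into a differential inequality for the scalar $|\mathring{Rm}|$. Using the identity $\tfrac{1}{2}\Delta|\mathring{Rm}|^2=|\mathring{Rm}|\Delta|\mathring{Rm}|+|\nabla|\mathring{Rm}||^2$ together with the Kato inequality $|\nabla\mathring{Rm}|^2\ge|\nabla|\mathring{Rm}||^2$, I divide through by $|\mathring{Rm}|$ on the open set $\{|\mathring{Rm}|>0\}$ (with a standard cutoff handling the zero set) to obtain
\[
\Delta|\mathring{Rm}|\ge\tfrac{R}{n-1}|\mathring{Rm}|-C_1(n)|\mathring{Rm}|^2.
\]
Integrating over the closed manifold $M$ makes the left-hand side vanish, and multiplication by $n-1$ rearranges this into the advertised inequality $\int_M|\mathring{Rm}|[R-(n-1)C_1(n)|\mathring{Rm}|]\le 0$.

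For the equality case, I would trace each inequality invoked. Equality in Kato forces $|\nabla\mathring{Rm}|^2=|\nabla|\mathring{Rm}||^2$; equality in Lemma 2.1 constrains $\mathring{Rm}$ to a very rigid algebraic type at every point; and the vanishing of the integrand forces $|\mathring{Rm}|\equiv R/((n-1)C_1(n))$ wherever $\mathring{Rm}\ne 0$. I expect these combined constraints to force $\mathring{Rm}\equiv 0$, so that $M$ has constant sectional curvature, and positivity of $R$ then identifies $M$ as a quotient of the round $\mathbb{S}^n$. The main obstacle is this last step: unlike Theorem 1.1, where the weaker pinching admits Derdzi\'nski-type warped products in the equality case, here controlling the \emph{full} traceless curvature collapses the equality set, but ruling out the intermediate possibility $|\mathring{Rm}|\equiv\text{const}>0$ requires careful use of the algebraic rigidity from Lemma 2.1 combined with $\nabla\mathring{Rm}\equiv 0$ to force the contradiction.
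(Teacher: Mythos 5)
Your derivation of the integral inequality is essentially the paper's: the Weitzenb\"ock formula for $|\mathring{Rm}|^2$ from Lemma 2.1, the Kato inequality $|\nabla\mathring{Rm}|^2\ge|\nabla|\mathring{Rm}||^2$, division by $|\mathring{Rm}|$ via a cutoff $f_\epsilon=\max(|\mathring{Rm}|,\epsilon)$ to handle the zero set, and integration over the closed manifold. One caveat already at this stage: the Weitzenb\"ock inequality (2.10) contains an additional nonnegative term $\frac{(n-2)R}{n(n-1)}|W|^2$ that is absent from your schematic formula. It is harmless for the inequality, since it is simply discarded, but it is the linchpin of the equality case, and losing sight of it is exactly what leaves you stuck at the end.

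The equality case is where your proposal has a genuine gap, and you acknowledge as much. Two specific problems. First, equality in $\int_M|\mathring{Rm}|[R-(n-1)C_1(n)|\mathring{Rm}|]\le 0$ does not force the integrand to vanish pointwise (it can change sign); what it forces is that every inequality used in the derivation becomes an equality. Second, your proposed endgame --- combining the algebraic rigidity of Lemma 2.1 with ``$\nabla\mathring{Rm}\equiv 0$'' to exclude $|\mathring{Rm}|\equiv\mathrm{const}>0$ --- is not carried out, and $\nabla\mathring{Rm}=0$ does not follow from equality in the Kato inequality alone. The paper's resolution is different and decisive: equality forces the discarded term $\frac{(n-2)R}{n(n-1)}|W|^2$ to vanish, hence $W=0$ and $M$ is locally conformally flat with constant positive scalar curvature. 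Then $|\mathring{Rm}|=\frac{2}{\sqrt{n-2}}|\mathring{Ric}|$ by (2.5), so the equality becomes $\int_M\bigl[R-\frac{2(n-1)C_1(n)}{\sqrt{n-2}}|\mathring{Ric}|\bigr]|\mathring{Ric}|=0$, whereas the sharper conformally flat estimate (2.11), run through the same cutoff argument, yields $\int_M\bigl[R-\sqrt{n(n-1)}|\mathring{Ric}|\bigr]|\mathring{Ric}|\le 0$. Since $\frac{2(n-1)C_1(n)}{\sqrt{n-2}}>\sqrt{n(n-1)}$, comparing the two forces $\int_M|\mathring{Ric}|^2=0$, so $M$ is Einstein and conformally flat, hence a quotient of the round sphere. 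You would need to supply this comparison (or an equivalent mechanism) to close the proof; as written, the rigidity statement is not established.
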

\begin{corollary}
Let $M$ be a compact  Riemannian $n$-manifold
with harmonic curvature and positive scalar curvature. If
$$|\mathring{Rm}|\leq\frac{R}{(n-1)C_1(n)},$$
then $M$ is isometric to a quotient of the round $\mathbb{S}^n$.
\end{corollary}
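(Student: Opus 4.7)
The plan is to derive Corollary 1.3 as an immediate consequence of Theorem 1.2. Under the pointwise pinching $|\mathring{Rm}|\leq\frac{R}{(n-1)C_1(n)}$, the factor $R-(n-1)C_1(n)|\mathring{Rm}|$ is non-negative at every point of $M$. Since $|\mathring{Rm}|\geq 0$ as well, the integrand
$$|\mathring{Rm}|\bigl[R-(n-1)C_1(n)|\mathring{Rm}|\bigr]$$
is pointwise non-negative on $M$.

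Combining this with the integral inequality supplied by Theorem 1.2, which asserts that the integral of the very same quantity is non-positive, forces the integrand to vanish identically on $M$ and the integral itself to equal zero. We are therefore precisely in the equality case of Theorem 1.2, whose rigidity conclusion states that $M$ must be isometric to a quotient of the round sphere $\mathbb{S}^n$. This yields the desired conclusion.

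I do not expect a substantive obstacle in this derivation: all of the analytic work is packaged inside Theorem 1.2, and the corollary is essentially a reformulation of its equality case under a pointwise pinching bound. The only point worth noting is that $R/((n-1)C_1(n))$ is the precise threshold at which the sign of $R-(n-1)C_1(n)|\mathring{Rm}|$ changes, so that the hypothesis is exactly tailored to push the integrand to the non-negative side and trigger the equality case; this is transparent from inspection of the integrand appearing in Theorem 1.2.
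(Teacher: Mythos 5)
Your proof is correct, but it is not the route the paper takes. You observe that the pointwise hypothesis makes the integrand $|\mathring{Rm}|\bigl[R-(n-1)C_1(n)|\mathring{Rm}|\bigr]$ nonnegative, so Theorem 1.2's integral inequality forces the integral to vanish, and you then invoke the equality ("if and only if") clause of Theorem 1.2 to conclude. The paper instead argues in two cases: when $|\mathring{Rm}|<\frac{R}{(n-1)C_1(n)}$ it applies the maximum principle directly to the Bochner-type inequality (2.10) of Lemma 2.1 (the hypothesis makes $|\mathring{Rm}|^2$ subharmonic on a compact manifold, hence constant, and then the strictness forces $\mathring{Rm}=0$), and only in the borderline case $|\mathring{Rm}|=\frac{R}{(n-1)C_1(n)}$ does it fall back on Theorem 1.2. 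Your argument is cleaner in one respect: the paper's case split is awkward, since the pointwise hypothesis permits strict inequality at some points and equality at others, whereas your integral argument treats all such configurations uniformly in one stroke. What the paper's maximum-principle route buys is independence from the equality analysis of Theorem 1.2 in the generic (strict) case, at the cost of having to handle the boundary case separately anyway. Both proofs ultimately rest on the same analytic input (Lemma 2.1), so there is no substantive gap in your version; it is a valid and arguably tidier derivation.
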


\begin{theorem}
Let $(M^n, g)(n\geq4)$  be an $n$-dimensional   compact Riemannian manifold  with harmonic curvature and  positive scalar curvature. If
\begin{equation}
\begin{split}\left(\int_M|W+\frac{\sqrt{n}}{\sqrt{2}(n-2)}\mathring{Ric}\circledwedge g|^{\frac n2}\right)^{\frac 2n}
<\frac{\sqrt{2}}{\sqrt{(n-2)(n-1)}}\mu([g]),
\end{split}
\end{equation}
then $M$ is an Einstein manifold.
In particular, if the pinching constant in (1.1) is weakened to $\frac{2}{nC_2(n)}\mu([g])$, where $C_2(n)$ is defined in Lemma 2.4, then $M$ is isometric to a quotient of the round $\mathbb{S}^n$.
\end{theorem}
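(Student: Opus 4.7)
\emph{Overall plan.} I will prove the Einstein conclusion by a Weitzenb\"ock--Sobolev pinching argument applied to the traceless Ricci tensor $\mathring{Ric}$, which is Codazzi because $\delta Rm=0$; once the metric is Einstein, I will re-run the analogous argument with the Weyl tensor $W$ in place of $\mathring{Ric}$ to upgrade the conclusion under the stronger pinching constant.

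\emph{Stage 1: Einstein conclusion.} Since $\mathring{Ric}$ is a traceless Codazzi tensor and $R$ is constant, a standard Weitzenb\"ock computation yields
$$\frac{1}{2}\Delta|\mathring{Ric}|^2=|\nabla\mathring{Ric}|^2+\frac{R}{n-1}|\mathring{Ric}|^2-\frac{n}{n-2}\mathrm{tr}(\mathring{Ric}^3)-2W_{ikjl}\mathring{Ric}^{ik}\mathring{Ric}^{jl}.$$
Because $W$ and $\mathring{Ric}\circledwedge g$ lie in orthogonal irreducible $O(n)$-summands of the curvature space, the tensor $T:=W+\frac{\sqrt n}{\sqrt 2\,(n-2)}\mathring{Ric}\circledwedge g$ satisfies $|T|^2=|W|^2+\alpha_n|\mathring{Ric}|^2$ for an explicit $\alpha_n>0$. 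The key pointwise estimate I aim for combines the Okumura--Huisken bound $|\mathrm{tr}(\mathring{Ric}^3)|\leq\frac{n-2}{\sqrt{n(n-1)}}|\mathring{Ric}|^3$ with Cauchy--Schwarz applied to $W_{ikjl}\mathring{Ric}^{ik}\mathring{Ric}^{jl}$ to give
$$\frac{n}{n-2}\mathrm{tr}(\mathring{Ric}^3)+2W_{ikjl}\mathring{Ric}^{ik}\mathring{Ric}^{jl}\leq\sqrt{\frac{(n-2)(n-1)}{2}}\,|T|\,|\mathring{Ric}|^2,$$
with the prescribed coefficient $\frac{\sqrt n}{\sqrt 2\,(n-2)}$ chosen precisely so that the two cubic contributions repackage into the single factor $|T|$. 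Integrating the Weitzenb\"ock identity, inserting this pointwise bound, splitting $\int_M|T||\mathring{Ric}|^2\leq\|T\|_{n/2}\|\mathring{Ric}\|_{2n/(n-2)}^2$ by H\"older, controlling $\|\mathring{Ric}\|_{2n/(n-2)}^2$ by the Yamabe inequality for $f=|\mathring{Ric}|$, and invoking the refined Kato inequality $|\nabla\mathring{Ric}|^2\geq\frac{n+2}{n}|\nabla|\mathring{Ric}||^2$ valid for a traceless Codazzi tensor, one reaches
$$\left(1-\sqrt{\frac{(n-2)(n-1)}{2}}\,\frac{\|T\|_{n/2}}{\mu([g])}\right)\mathcal E(\mathring{Ric})\leq 0,$$
where $\mathcal E(\mathring{Ric})$ is a positive-definite combination of $\int_M|\nabla\mathring{Ric}|^2$ and $\int_M R|\mathring{Ric}|^2$. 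Hypothesis (1.1) makes the first factor strictly positive, forcing $\mathring{Ric}\equiv 0$; hence $g$ is Einstein.

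\emph{Stage 2 and main obstacle.} Once $\mathring{Ric}\equiv 0$, $T=W$, and the standard Weitzenb\"ock formula for $W$ (available since $\delta W=0$) takes the form $\frac{1}{2}\Delta|W|^2=|\nabla W|^2+\frac{R}{n-1}|W|^2-Q(W)$, where Lemma~2.4 supplies the algebraic bound $|Q(W)|\leq C_2(n)|W|^3$. Replaying the H\"older--Yamabe--refined-Kato scheme with $|W|$ in place of $|\mathring{Ric}|$ and the sharper hypothesis $\|W\|_{n/2}<\frac{2}{nC_2(n)}\mu([g])$ forces $W\equiv 0$; an Einstein manifold with vanishing Weyl tensor has constant sectional curvature, and the positivity of $R$ then identifies $M$ with a quotient of the round $\mathbb{S}^n$. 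The principal obstacle lies in Stage~1: isolating the precise constant $\sqrt{(n-2)(n-1)/2}$ and the particular pairing defining $T$ requires careful bookkeeping of how the Huisken bound on $\mathrm{tr}(\mathring{Ric}^3)$ combines with Cauchy--Schwarz on the Weyl--Ricci--Ricci term, and it is this combination that dictates the coefficient $\frac{\sqrt n}{\sqrt 2\,(n-2)}$ appearing in the statement; the remaining Weitzenb\"ock/Yamabe/refined-Kato machinery is by now standard in the $L^{n/2}$-pinching literature.
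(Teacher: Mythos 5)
Your overall architecture (Weitzenb\"ock formula for the Codazzi tensor $\mathring{Ric}$, a pointwise algebraic bound repackaging the two cubic terms into a single factor $|T|\,|\mathring{Ric}|^2$ with $T=W+\tfrac{\sqrt n}{\sqrt2(n-2)}\mathring{Ric}\circledwedge g$, then H\"older and the Yamabe--Sobolev inequality) is exactly the paper's, but two of your concrete steps do not deliver the stated constant. First, the key pointwise estimate: the correct bound (Lemma 2.5 of the paper, following Catino and Bour) is
$\bigl|W_{ikjl}\mathring{R}_{ik}\mathring{R}_{jl}-\tfrac{n}{n-2}\mathrm{tr}(\mathring{Ric}^3)\bigr|\le\sqrt{\tfrac{n-2}{2(n-1)}}\,|T|\,|\mathring{Ric}|^2$, not $\sqrt{\tfrac{(n-2)(n-1)}{2}}\,|T|\,|\mathring{Ric}|^2$ as you write (your constant is larger by a factor $n-1$), and it cannot be reached by your proposed route of adding Huisken's bound $\sqrt{\tfrac{n-2}{2(n-1)}}|W||\mathring{Ric}|^2$ for the Weyl term to the Okumura bound $\sqrt{\tfrac{n}{n-1}}|\mathring{Ric}|^3$ for the trace term and then applying Cauchy--Schwarz: that gives $\sqrt{a^2+\tfrac{n-2}{2n}b^2}\,|T|=\sqrt{\tfrac{n-2}{n-1}}\,|T|$, losing a factor $\sqrt2$ against the sharp joint estimate, which requires diagonalizing $\mathring{Ric}$ and using the full trace-freeness of $W$ simultaneously.

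Second, and more seriously, your Sobolev step tests the Yamabe functional with $f=|\mathring{Ric}|$. Substituting $\tfrac{4(n-1)}{n-2}\int|\nabla f|^2\ge\mu([g])\|f\|_{2n/(n-2)}^2-R\int f^2$ into the integrated Bochner inequality (with the refined Kato constant $\tfrac{n+2}{n}$) leaves a residual term $\tfrac{-n^2+4n+4}{4n(n-1)}R\int|\mathring{Ric}|^2$, whose coefficient is negative for every $n\ge5$; so your claimed ``positive-definite combination'' $\mathcal E(\mathring{Ric})$ is not positive-definite, and even for $n=4$ this choice yields a strictly smaller threshold than $\tfrac{\sqrt2}{\sqrt{(n-2)(n-1)}}\mu([g])$. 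The missing idea is to test with $f=|\mathring{Ric}|^{\gamma}$ for the optimized exponent $\gamma=\tfrac{(n-2)(1+\sqrt{1-4/n})}{4}$: this is precisely the value at which the leftover $R$-term cancels identically and the coefficient of $\mu([g])$ is maximized, and it is this optimization -- not the pointwise algebraic constant -- that accounts for the factor separating $\sqrt{\tfrac{2(n-1)}{n-2}}$ from the stated threshold $\sqrt{\tfrac{2}{(n-2)(n-1)}}$. Your Stage 2 (Einstein plus $\|W\|_{n/2}<\tfrac{2}{nC_2(n)}\mu([g])$ forces constant curvature) is consistent in spirit with the paper, which simply cites an existing rigidity theorem for Einstein metrics at that stage, but the same exponent-optimization issue recurs there if you rederive it.
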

\begin{remark}Theorem 1.4 improves Theorems 1 and 2 in \cite{HV}. When $n=3$, these manifolds with harmonic curvature are locally conformally
flat. Theorem 1.4 in dimension 3 is discussed by Xiao and the author in \cite{FX}. The inequality (1.1) of this theorem is optimal.
The critical case is given by the following example. If $(\mathbb{S}^1(t)\times \mathbb{S}^{n-1}, g_t)$ is the product of the circle
of radius $t$ with $\mathbb{S}^{n-1}$, and if $g_t$ is the standard product metric normalized such that $Vol(g_t)=1$, we have
$W=0$, $g_t$ is a Yamabe metric for small $t$ (see \cite{S1}), and $\left(\int_M|\mathring{Ric}|^{\frac n2}\right)^{\frac 2n}=\frac{\mu([g_t])}{\sqrt{n(n-1)}}$, which is the critical case  of the inequality (1.1) in Theorem 1.4. We know that $(\mathbb{S}^1(t)\times \mathbb{S}^{n-1}, g_t)$ is not Einstein.
\end{remark}

\begin{corollary}
Let $(M^4, g)$  be a $4$-dimensional   compact Riemannian manifold  with harmonic curvature and  positive scalar curvature. If
\begin{equation}\int_M|W|^{2}+8\int_M|\mathring{Ric}|^2
\leq\frac{1}{3}\int_M R^2,\end{equation}
then i)$M$ is isometric to a quotient of the round $\mathbb{S}^4$;

ii) $M$ is a compact positive Einstein manifolds which is either K\"{a}hler, or the quotient of a K\"{a}hler manifold by a free, isometric,
anti-holomorphic involution;

iii) $M$ is a $\mathbb{CP}^2$ with the Fubini-Study metric;

iv) $M$ is Einstein and self-dual.

In particular, if \begin{equation}\int_M|W|^{2}+6\int_M|\mathring{Ric}|^2
\leq\frac{1}{6}\int_M R^2,\end{equation} then  i), ii) and iii) hold.
\end{corollary}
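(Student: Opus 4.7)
The plan is to derive Corollary 1.6 from Theorem 1.4 (specialized to $n=4$) together with the classification of positive Einstein 4-manifolds satisfying an $L^2$ Weyl pinching.

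First I would rewrite the integrand in (1.1) when $n=4$. Using the orthogonality of $W$ and $\mathring{Ric}\circledwedge g$ in the Riemann curvature decomposition together with the Kulkarni-Nomizu identity $|\mathring{Ric}\circledwedge g|^2=4(n-2)|\mathring{Ric}|^2$, one gets for $n=4$
\[
\Bigl|W+\tfrac{\sqrt{n}}{\sqrt{2}(n-2)}\mathring{Ric}\circledwedge g\Bigr|^2=|W|^2+4|\mathring{Ric}|^2,
\]
so Theorem 1.4 in dimension four reads: $\int_M(|W|^2+4|\mathring{Ric}|^2)<\tfrac{1}{3}\mu([g])^2$ forces $M$ Einstein, and the sharper pinching $<\tfrac{1}{4C_2(4)^2}\mu([g])^2$ forces $M$ to be a round-sphere quotient.

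Next I would identify $\mu([g])^2$ with $\int_M R^2$. Harmonic curvature makes $R$ constant, so $Q_g(1)^2=R^2\operatorname{Vol}(M)=\int_M R^2\ge \mu([g])^2$, with equality exactly when $g$ is a Yamabe representative. By Obata's uniqueness theorem for constant scalar curvature metrics of positive Yamabe invariant, either $(M,[g])$ is conformally the round $\mathbb{S}^4$ --- in which case harmonic curvature and constant positive $R$ force $g$ to be (a quotient of) the round metric, giving case (i) --- or $g$ is itself a Yamabe minimizer and $\mu([g])^2=\int_M R^2$. In the latter case (1.2) rewrites as
\[
\int_M\bigl(|W|^2+4|\mathring{Ric}|^2\bigr)\le \tfrac{1}{3}\mu([g])^2-4\int_M|\mathring{Ric}|^2,
\]
which is strictly less than $\tfrac{1}{3}\mu([g])^2$ unless $\mathring{Ric}\equiv 0$. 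Hence either $\mathring{Ric}\equiv 0$ directly, or Theorem 1.4 applies to yield $M$ Einstein (and again $\mathring{Ric}\equiv 0$). So $M$ is Einstein outside the sphere case.

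Once $M$ is a compact positive Einstein 4-manifold, (1.2) reduces to $\int_M|W|^2\le \tfrac{1}{3}\int_M R^2$; the cases (ii)--(iv) then follow from the 4-dimensional Einstein classification: Derdzi\'nski's theorem for the K\"ahler-Einstein case (ii), Gursky's sharp Weyl inequality isolating Fubini-Study $\mathbb{CP}^2$ in case (iii), and LeBrun's characterization of half-conformally flat Einstein manifolds in case (iv). These are organized via the Chern-Gauss-Bonnet identity $32\pi^2\chi(M)=\int_M|W|^2+\tfrac{1}{6}\int_M R^2$ and the signature identity $48\pi^2\tau(M)=\int_M(|W^+|^2-|W^-|^2)$. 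For the tighter pinching (1.3) the Einstein case becomes $\int_M|W|^2\le \tfrac{1}{6}\int_M R^2$, and combining this with Hitchin-Thorpe and LeBrun's lower bound on $\int_M|W^+|^2$ for non-K\"ahler Einstein manifolds rules out case (iv), leaving (i)--(iii).

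The main obstacle is the Yamabe reduction step: one must dispatch the round-sphere conformal class via Obata's theorem before identifying $\int_M R^2$ with $\mu([g])^2$. After that bridge, Step 4 is essentially a matter of matching the pinching regime to families already catalogued in the literature on 4-dimensional Einstein geometry.
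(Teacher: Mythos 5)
Your reduction of the integrand in (1.1) to $|W|^2+4|\mathring{Ric}|^2$ when $n=4$ is correct, and the overall plan (convert (1.2) into the hypothesis of Theorem 1.4, then classify the Einstein case) matches the paper's. The argument breaks down, however, at the bridge from $\int_M R^2$ to $\mu([g])^2$. You correctly observe that $\int_M R^2=Q_g(1)^2\ge\mu([g])^2$, i.e.\ the trivial inequality points the wrong way, and you try to repair this with a dichotomy: either $(M,[g])$ is the round conformal class of $\mathbb{S}^4$, or $g$ is a Yamabe minimizer so that $\int_M R^2=\mu([g])^2$. There is no such theorem. Obata's uniqueness statement applies to \emph{Einstein} (or conformally Einstein) metrics, not to metrics that merely have constant scalar curvature; a constant scalar curvature metric is a critical point of the restricted Yamabe functional but need not be the minimizer. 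The paper's own Remark 1.5 already contains a counterexample to your dichotomy: $\mathbb{S}^1(t)\times\mathbb{S}^{3}$ has harmonic curvature and constant positive scalar curvature for every $t$, is never conformally the round sphere, and is a Yamabe metric only for small $t$. For large $t$ neither horn of your dichotomy holds, so your rewriting of (1.2) as $\int_M(|W|^2+4|\mathring{Ric}|^2)\le\frac13\mu([g])^2-4\int_M|\mathring{Ric}|^2$ is unjustified.

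The missing ingredient is Gursky's four-dimensional estimate $\int_M R^2-12\int_M|\mathring{Ric}|^2\le\mu([g])^2$, with equality only for conformally Einstein metrics; this is exactly what the paper invokes, and it explains the coefficient $8$ in (1.2): dividing Gursky's inequality by $3$ costs an extra $4\int_M|\mathring{Ric}|^2$, so (1.2) yields $\int_M(|W|^2+4|\mathring{Ric}|^2)\le\frac13\mu([g])^2$, i.e.\ (1.1) with at worst equality. The strict case is then handled by Theorem 1.4, while the equality case forces $g$ to be conformally Einstein and hence Einstein by the (correctly applied) Obata argument for constant scalar curvature metrics in a conformally Einstein class. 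The fact that the hypothesis carries $8\int_M|\mathring{Ric}|^2$ while the Theorem 1.4 quantity needs only $4\int_M|\mathring{Ric}|^2$ should have signalled that a nontrivial inequality relating $\int_M R^2$ and $\mu([g])^2$ was being consumed. Your final classification step for cases (ii)--(iv) via the four-dimensional Einstein literature is in the right spirit --- the paper delegates it to Corollary 1.16 of its reference [F] --- but it cannot be reached until the Yamabe bridge is repaired.
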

\begin{remark}The pinching conditions (1.2) and (1.3) in Corollary 1.6 is equivalent to the following
\begin{equation*}\int_M|W|^{2}+\frac {1}{15}\int_M R^2
\leq\frac{128}{5}{\pi}^2\chi(M)\end{equation*}
and
\begin{equation*}\int_M|\mathring{Rm}|^{2}
\leq16{\pi}^2\chi(M),\end{equation*}
where $\chi(M)$ is the Euler-Poincar\'{e} characteristic of $M$.
\end{remark}

As we mentioned above, Theorem 1.4 is sharped. By this we mean that we can precisely characterize the case of equality:
\begin{theorem}
Let $(M^n, g)(n\geq4)$  be an $n$-dimensional   compact Riemannian manifold  with harmonic curvature and  positive scalar curvature. If
\begin{equation}
\begin{split}\left(\int_M|W+\frac{\sqrt{n}}{\sqrt{2}(n-2)}\mathring{Ric}\circledwedge g|^{\frac n2}\right)^{\frac 2n}
=\frac{\sqrt{2}}{\sqrt{(n-2)(n-1)}}\mu([g]),
\end{split}
\end{equation}
then
 i) $M$ is Einstein manifold;

ii) $M$ is covered isometrically by $\mathbb{S}^1\times \mathbb{S}^{n-1}$ with the product metric;

iii) $M$ is covered isometrically by $\mathbb{S}^1\times \mathbb{S}^{n-1}$ with a rotationally symmetric Derdzi\'{n}ski metric.
\end{theorem}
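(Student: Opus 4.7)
The plan is to revisit the proof of Theorem 1.4 and isolate the precise chain of inequalities that takes us from $(1.1)$ to the Einstein conclusion; equality in $(1.4)$ must then saturate every link of the chain, and the combined rigidity will identify $(M,g)$ as one of the three models listed.

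First, I would recast the proof of Theorem 1.4 as a three-step chain, keeping track of equality cases throughout. Since $M$ has harmonic curvature, $\mathring{Ric}$ is a Codazzi tensor, so there is a Weitzenb\"ock identity of the form $\tfrac12 \Delta |\mathring{Ric}|^2 = |\nabla \mathring{Ric}|^2 + Q(Rm,\mathring{Ric})$. Multiplying by a suitable power $|\mathring{Ric}|^{\alpha}$ and integrating by parts, one invokes: (a) the refined Kato inequality for Codazzi tensors, $|\nabla \mathring{Ric}|^2 \geq \tfrac{n+2}{n}|\nabla|\mathring{Ric}||^2$; (b) the pointwise algebraic bound $|Q(Rm,\mathring{Ric})| \leq C(n)\,|W + \tfrac{\sqrt n}{\sqrt 2(n-2)}\mathring{Ric}\circledwedge g|\cdot |\mathring{Ric}|^{2}$, arrived at by rewriting the Weyl and trace-free Ricci contributions together; and (c) the Yamabe--Sobolev inequality applied to $u=|\mathring{Ric}|^{q}$ for the appropriate $q$, combined with H\"older's inequality, which produces the constant $\mu([g])$ in $(1.1)$.

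Second, under hypothesis $(1.4)$, each of (a), (b), (c) holds as equality. I would then split into cases. If $\mathring{Ric}\equiv 0$, then $M$ is Einstein and we land in case i). Otherwise $u=|\mathring{Ric}|^q$ is strictly positive on a dense open set and is an extremal of the Yamabe functional $Q_g$, so it is a positive solution of the Yamabe equation. Equality in the algebraic bound (b), which is of Okumura--Huisken type, forces $\mathring{Ric}$ to have exactly two distinct eigenvalues with multiplicities $1$ and $n-1$ everywhere on this open set. Equality in the refined Kato inequality (a), in the presence of the eigenvalue structure from (b), forces the simple eigendirection to be parallel and its orthogonal complement to be umbilical --- this is exactly the infinitesimal signature of a warped product.

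Third, I would invoke Derdzi\'nski's classification for manifolds with harmonic curvature whose trace-free Ricci tensor has two eigenvalues of multiplicities $1$ and $n-1$: such a metric is locally a warped product $I\times_F N^{n-1}$ with $(N, g_N)$ Einstein; constancy of the scalar curvature then pins down $F$ as a solution of the Derdzi\'nski ODE. Compactness of $M$ and positivity of $R$ force the base factor to close up into $\mathbb{S}^1$ (after passing to a cover) and the Einstein fibre to have positive scalar curvature; equality in (b) further forces $W_N=0$, so the fibre is a round $\mathbb{S}^{n-1}$. If $F$ is constant we obtain case ii); if $F$ is a non-constant periodic solution of the ODE we obtain case iii). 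The main obstacle will be step (b): verifying that equality in the pointwise Cauchy--Schwarz--type bound between $\langle W + \tfrac{\sqrt n}{\sqrt 2(n-2)}\mathring{Ric}\circledwedge g,\,\mathring{Ric}\otimes\mathring{Ric}\rangle$ and the product of norms really forces the eigenvalue pattern $(1,n-1)$ on $\mathring{Ric}$ (and not a weaker algebraic condition), because only this exact pattern unlocks Derdzi\'nski's structure theorem and recovers the three listed models without spurious extra cases.
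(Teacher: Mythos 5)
Your overall strategy (saturate every inequality in the proof of Theorem 1.4, extract the $(1,n-1)$ eigenvalue structure of $\mathring{Ric}$ in the non--Einstein case, then invoke Derdzi\'nski's classification) is the same skeleton the paper uses, but there is a genuine gap at exactly the point you flag as the ``main obstacle,'' and your proposed fix does not work. Equality in the algebraic estimate of Lemma 2.5 only yields $W_{kijl}\mathring{R}_{ij}\mathring{R}_{kl}=0$, and once $\mathring{Ric}$ has an eigenvalue of multiplicity $n-1$ and one of multiplicity $1$, this cross term vanishes \emph{identically}: writing $\mathring{R}_{ij}=a\delta_{ij}+b\,v_iv_j$ for a unit eigenvector $v$, every contraction $W_{kijl}\mathring{R}_{ij}\mathring{R}_{kl}$ reduces to traces of $W$ and to $W(v,v,v,v)$, all of which are zero by the total trace-freeness and the antisymmetries of the Weyl tensor. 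So the pointwise equality carries no information whatsoever about $W$, and your assertion that ``equality in (b) forces $W_N=0$'' is unjustified. Concretely, $\mathbb{S}^1\times N^{n-1}$ with $N$ a non-round positive Einstein manifold satisfies all of your pointwise equality conditions (Kato, eigenvalue structure, vanishing cross term), yet has $W\neq 0$; your argument cannot exclude it, so you would only reach ``covered by $\mathbb{S}^1\times N$ with $N$ Einstein,'' which is strictly weaker than conclusions ii) and iii).

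The paper closes this gap with an integral, not pointwise, dichotomy on $W$. Since $|W+\frac{\sqrt{n}}{\sqrt{2}(n-2)}\mathring{Ric}\circledwedge g|^2=|W|^2+\frac{2n}{n-2}|\mathring{Ric}|^2$ pointwise, if $W\not\equiv 0$ then the equality (1.4) forces the \emph{strict} inequality $\bigl(\int_M|\mathring{Ric}|^{\frac n2}\bigr)^{\frac 2n}<\frac{1}{\sqrt{n(n-1)}}\mu([g])$; feeding this into the improved Weitzenb\"ock identity $\triangle|\mathring{Ric}|^2=2|\nabla\mathring{Ric}|^2+\frac{2n}{n-2}\mathring{R}_{ij}\mathring{R}_{jl}\mathring{R}_{li}+\frac{2R}{n-1}|\mathring{Ric}|^2$ (valid because the cross term is zero) and rerunning the Sobolev argument yields $\mathring{Ric}\equiv 0$, i.e.\ case i). Hence in the non--Einstein case necessarily $W\equiv 0$, $M$ is locally conformally flat, and only then does the parallel/non-parallel Ricci dichotomy (de Rham decomposition versus Derdzi\'nski's theorem) produce the round fibre $\mathbb{S}^{n-1}$ of cases ii) and iii). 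You should restructure your second and third steps around this ``either Einstein or $W\equiv 0$'' alternative; without it the round sphere fibre cannot be recovered.
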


\begin{theorem}
Let $(M^n, g)(n\geq4)$  be an $n$-dimensional   compact Riemannian manifold  with harmonic curvature and  positive scalar curvature. If
\begin{equation}
\begin{split}C_2(n)\left(\int_M|W|^{\frac n2}\right)^{\frac 2n}+2\sqrt{\frac{n-1}{n}}\left(\int_M|\mathring{Ric}|^{\frac n2}\right)^{\frac 2n}
<D(n)\mu([g]),
\end{split}
\end{equation}
where \begin{equation*}D(n)=
\begin{cases}\frac{n-2}{4(n-1)}, n\leq9;\\ \frac{2}{n}, n\geq 10,
\end{cases}\end{equation*} then $M$ is isometric to a quotient of the round $\mathbb{S}^n$. Moreover, in dimension, $4\leq n\leq9$,
the same result holds only assuming the weak inequality.
\end{theorem}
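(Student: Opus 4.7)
The plan is to combine a weighted Bochner--Weitzenb\"{o}ck argument for the trace-free Riemann tensor $\mathring{Rm}$ with the Yamabe--Sobolev inequality and the sharp algebraic bound of Lemma 2.4. The strategy closely parallels the proofs of Theorems 1.1, 1.2, and 1.4.

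First I would start from the Bochner identity
$$\tfrac{1}{2}\Delta|\mathring{Rm}|^2 = |\nabla\mathring{Rm}|^2 - \langle\mathring{Rm},Q(\mathring{Rm})\rangle,$$
and use the sharp algebraic estimate (supplied by Lemma 2.4)
$$\langle\mathring{Rm},Q(\mathring{Rm})\rangle \geq \tfrac{R}{n-1}|\mathring{Rm}|^2 - \bigl[C_2(n)|W|+2\sqrt{\tfrac{n-1}{n}}|\mathring{Ric}|\bigr]|\mathring{Rm}|^2,$$
in which the coefficient $2\sqrt{(n-1)/n}$ is the optimal one already encountered in Theorem 1.1. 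Multiplying the Bochner identity by the weight $|\mathring{Rm}|^{(n-6)/2}$ and integrating by parts yields a weighted integral inequality relating $\int |\mathring{Rm}|^{(n-6)/2}|\nabla\mathring{Rm}|^2$ to $\int \bigl[C_2(n)|W|+2\sqrt{(n-1)/n}|\mathring{Ric}|\bigr]|\mathring{Rm}|^{(n-2)/2}$ and the scalar-curvature contribution $\tfrac{R}{n-1}\int|\mathring{Rm}|^{(n-2)/2}$. I would then feed this into the Yamabe--Sobolev inequality applied to $u = |\mathring{Rm}|^{(n-2)/4}$, invoke the Kato inequality (its refined form exploiting the Codazzi character of $Ric$ for $n\leq 9$, producing $D(n)=(n-2)/(4(n-1))$, and its plain form for $n\geq 10$, producing $D(n)=2/n$), and apply H\"{o}lder's inequality on the right-hand side to arrive at
$$\Bigl[D(n)\mu([g]) - C_2(n)\Bigl(\int_M|W|^{\tfrac{n}{2}}\Bigr)^{\tfrac{2}{n}} - 2\sqrt{\tfrac{n-1}{n}}\Bigl(\int_M|\mathring{Ric}|^{\tfrac{n}{2}}\Bigr)^{\tfrac{2}{n}}\Bigr]\Bigl(\int_M|\mathring{Rm}|^{\tfrac{n}{2}}\Bigr)^{\tfrac{n-2}{n}} \leq 0.$$

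Under the strict pinching, the bracket is strictly positive, forcing $\mathring{Rm}\equiv 0$; combined with $R>0$ and compactness, $M$ has constant positive sectional curvature and is therefore a finite quotient of the round sphere $\mathbb{S}^n$. For the weak inequality in the range $4\leq n\leq 9$, if $\mathring{Rm}\not\equiv 0$ then equality must hold at every intermediate step, which forces $u$ to be a Yamabe minimizer, pointwise equality in Lemma 2.4 (aligning the eigenstructures of $W$, $\mathring{Ric}$ and $\mathring{Rm}$), and equality in Kato; invoking the equality statement of Theorem 1.1 then rules out the non-spherical rigidity models (Einstein with nonzero Weyl, the product $\mathbb{S}^1\times\mathbb{S}^{n-1}$, and the rotationally symmetric Derdzi\'{n}ski metrics), leaving only the spherical-quotient conclusion. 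The main obstacles are (i) producing the sharp algebraic bound of Lemma 2.4 with the exact coefficients $C_2(n)$ and $2\sqrt{(n-1)/n}$ via a careful eigenvalue analysis of $Q(\mathring{Rm})$ decomposed into its Weyl and traceless Ricci parts, and (ii) justifying the crossover of the optimal $D(n)$ at $n=10$ (where the refined-Kato route ceases to improve upon the direct Sobolev--H\"{o}lder route) and carrying out the equality analysis for $4\leq n\leq 9$ tightly enough to eliminate the non-spherical candidates.
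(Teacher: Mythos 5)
There is a genuine gap at the heart of your argument. You run the Bochner--Weitzenb\"ock formula on the full trace-free curvature $\mathring{Rm}$ and claim that Lemma~2.4 supplies the estimate $\langle\mathring{Rm},Q(\mathring{Rm})\rangle \geq \frac{R}{n-1}|\mathring{Rm}|^2-\bigl[C_2(n)|W|+2\sqrt{\tfrac{n-1}{n}}|\mathring{Ric}|\bigr]|\mathring{Rm}|^2$. But Lemma~2.4 is an estimate for $\triangle|W|^2$, not for $\mathring{Rm}$: its reaction term is $-\bigl[C_2(n)|W|+2\sqrt{\tfrac{n-1}{n}}|\mathring{Ric}|\bigr]|W|^2$, with $|W|^2$ (not $|\mathring{Rm}|^2$) as the quadratic factor. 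The paper's only estimate for $\mathring{Rm}$ is Lemma~2.1, which carries the strictly larger constant $C_1(n)$ and does not split into a $C_2(n)|W|$ part plus a $2\sqrt{(n-1)/n}|\mathring{Ric}|$ part; the cubic reaction terms for $\mathring{Rm}$ contain cross terms in $W$ and $\mathring{Ric}$ that are not controlled by the combination you write down, so your key inequality is unsupported and almost certainly not available with those constants. This is why the hypothesis of Theorem~1.9 is tailored to $|W|$: the actual proof applies the weighted argument ($u=|W|^{\gamma}$, integration by parts, Yamabe--Sobolev, H\"older) to Lemma~2.4 and first concludes only that $W\equiv 0$, i.e.\ $M$ is locally conformally flat. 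It then needs a second step: the pinching condition forces $\bigl(\int_M|\mathring{Ric}|^{n/2}\bigr)^{2/n}<\tfrac{1}{\sqrt{n(n-1)}}\mu([g])$, and an external rigidity theorem for conformally flat manifolds with constant positive scalar curvature (Theorem~1.13 of [FX]) is invoked to reach the spherical space form. Your one-shot conclusion $\mathring{Rm}\equiv0$ skips this second step, but only because it rests on the unavailable estimate.

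Two further points. First, your explanation of the dichotomy in $D(n)$ via refined-versus-plain Kato is not what happens: only the plain Kato inequality $|\nabla W|\geq|\nabla|W||$ is used, and the crossover at $n=10$ comes from the choice of the exponent $\gamma$ in the weight. For $4\leq n\leq 9$ one takes $\gamma=1$, where the leftover scalar-curvature term $\tfrac{10n-8-n^2}{4n(n-1)}R\int_M|W|^2$ is nonnegative --- this is precisely what yields the conclusion under the weak inequality --- while for $n\geq 10$ one must choose $\gamma$ with $8(n-1)\gamma+\tfrac{n(n-2)}{\gamma}-2n(n-2)=0$ to kill that term, which changes the Sobolev coefficient $(2-\tfrac1\gamma)\tfrac{n-2}{4(n-1)}$ to $\tfrac{2\gamma}{n}$ and hence $D(n)$ to $\tfrac2n$. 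Your proposed weight $u=|\mathring{Rm}|^{(n-2)/4}$ (i.e.\ $\gamma=\tfrac{n-2}{4}$) produces neither value of $D(n)$. Second, the borderline analysis for $4\leq n\leq 9$ does not require the equality classification of Theorem~1.1: the strictly positive leftover term forces $W=0$ even under the weak inequality, after which the argument proceeds as in the strict case.
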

\begin{remark}When $n\geq 10$, the pinching condition  of this theorem is optimal.
The critical case is given by this  example in Remark 1.5.
\end{remark}

As we mentioned above, Theorem 1.9 is sharped. By this we mean that we can precisely characterize the case of equality:
\begin{theorem}
Let $(M^n, g)(n\geq10)$  be an $n$-dimensional   compact Riemannian manifold  with harmonic curvature and  positive scalar curvature. If
\begin{equation}
\begin{split}C_2(n)\left(\int_M|W|^{\frac n2}\right)^{\frac 2n}+2\sqrt{\frac{n-1}{n}}\left(\int_M|\mathring{Ric}|^{\frac n2}\right)^{\frac 2n}
=\frac{2}{n}\mu([g]),
\end{split}
\end{equation}
then
i) $M$  is Einstein manifold;

ii) $M$ is covered isometrically by $\mathbb{S}^1\times \mathbb{S}^{n-1}$ with the product metric;

iii) $M$ is covered isometrically by $\mathbb{S}^1\times \mathbb{S}^{n-1}$ with a rotationally symmetric Derdzi\'{n}ski metric.
\end{theorem}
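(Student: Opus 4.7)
The plan is to trace the chain of estimates that establishes Theorem 1.9 and determine what each forced equality imposes. Recall that the pinching (1.5) in the range $n\geq 10$ (where $D(n)=2/n$) is obtained by combining (a) a Bochner--Weitzenb\"{o}ck identity for $|\mathring{Rm}|^2$, which uses the harmonic curvature hypothesis to replace $\delta W$ by $0$ and to treat $\mathring{Ric}$ as a Codazzi tensor; (b) a refined Kato inequality with the sharp constant for harmonic Weyl and for Codazzi traceless Ricci tensors; (c) the sharp Sobolev inequality
$$\mu([g])\Big(\int_M u^{2n/(n-2)}\Big)^{(n-2)/n}\leq \frac{4(n-1)}{n-2}\int_M|\nabla u|^2+\int_M R\,u^2$$
applied to a suitable power of $|\mathring{Rm}|$; and (d) a pointwise algebraic estimate bounding the cubic curvature term by $C_2(n)\,|W|\,|\mathring{Rm}|^2+2\sqrt{(n-1)/n}\,|\mathring{Ric}|\,|\mathring{Rm}|^2$. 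Under the equality assumption (1.6), each of (a)--(d) must hold with equality.

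Equality in the algebraic bound (d) forces, at every point where $\mathring{Rm}\neq 0$, that the Weyl tensor and the traceless Ricci attain their extremal algebraic shape simultaneously: $\mathring{Ric}$ has one simple eigenvalue and another of multiplicity $n-1$, and $W$ has the algebraic type of the Weyl tensor of a warped product $I\times N^{n-1}$ with $N$ Einstein. Equality in the refined Kato inequality (b), together with the Codazzi condition, forces $\nabla\mathring{Ric}$ (and correspondingly $\nabla W$) to be rank-one, aligned with the distinguished simple eigendirection. Equality in the Sobolev inequality (c) requires the test function to be a Yamabe extremizer, so the conformal metric $u^{4/(n-2)}g$ satisfies the Euler--Lagrange equation for the Yamabe functional, which in particular prevents $|\mathring{Rm}|$ from vanishing on a proper subset.

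Combining these constraints: if $\mathring{Rm}\equiv 0$ then $M$ has constant positive sectional curvature and is a quotient of $\mathbb{S}^n$, hence in particular Einstein, giving case i). Otherwise $|\mathring{Rm}|$ is nowhere zero, the distinguished eigendirection from (b) integrates into a parallel line field, and after passing to the universal cover and using compactness one obtains an isometric covering of $M$ by $(\mathbb{S}^1\times N^{n-1}, dt^2+F(t)^2 g_N)$ with $N$ compact Einstein of positive scalar curvature; constant $F$ yields case ii with $N=\mathbb{S}^{n-1}$, while non-constant $F$ yields case iii, through Derdzi\'{n}ski's classification of compact harmonic-curvature manifolds whose Ricci tensor has exactly two distinct eigenvalues. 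I expect the main technical obstacle to be the extraction of the parallel one-dimensional distribution from the combined Kato and algebraic equalities and the verification that its orthogonal complement has Einstein leaves: one must rule out that the distinguished eigendirection varies inconsistently, and check integrability of the orthogonal $(n-1)$-dimensional distribution. Once the warped-product structure is established, Derdzi\'{n}ski's theorem delivers the final classification, exactly as in the equality statement of Theorem 1.8.
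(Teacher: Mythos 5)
Your overall strategy---force equality in every inequality used to derive the pinching estimate of Theorem 1.9 and read off the resulting structure---is where the paper also starts, but your case analysis diverges in a way that breaks the proof. The paper's dichotomy is on whether $W$ vanishes identically, not on whether $\mathring{Rm}$ does. If $W\not\equiv 0$, then $\left(\int_M|W|^{n/2}\right)^{2/n}>0$, so the equality hypothesis forces the \emph{strict} inequality $\left(\int_M|\mathring{Ric}|^{n/2}\right)^{2/n}<\frac{1}{\sqrt{n(n-1)}}\mu([g])$, and a sub-critical argument as in the proof of Theorem 1.8 (using (2.20) together with the vanishing of $W_{kijl}\mathring{R}_{ij}\mathring{R}_{kl}$ extracted from the equality discussion) yields that $M$ is Einstein---conclusion i), with $W$ possibly nonzero. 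If $W\equiv 0$, the hypothesis (1.6) becomes exactly the equality (1.4) of Theorem 1.8, and that theorem delivers the full trichotomy. Your dichotomy ``$\mathring{Rm}\equiv 0$ versus $|\mathring{Rm}|$ nowhere zero'' cannot work: an Einstein manifold with $W\neq 0$ has $\mathring{Rm}=W\neq 0$ but $\mathring{Ric}\equiv 0$, so there is no distinguished simple eigendirection of $\mathring{Ric}$, no parallel line field, and no warped-product structure; your ``otherwise'' branch would wrongly push such manifolds into cases ii) or iii). Relatedly, case i) of the theorem is ``Einstein,'' not ``quotient of $\mathbb{S}^n$,'' and your argument never produces a general Einstein manifold.

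Two further inaccuracies. First, the estimate behind Theorem 1.9 for $n\geq 10$ is the Weitzenb\"ock inequality for $|W|^2$ (Lemma 2.4), not for $|\mathring{Rm}|^2$, and the test function in the Sobolev/Yamabe step is a power of $|W|$; consequently, when $W\equiv 0$ that step degenerates to the trivial identity $0=0$ and gives no unique-continuation information, so your claim that equality ``prevents $|\mathring{Rm}|$ from vanishing on a proper subset'' is unfounded. Second, the warped-product and Derdzi\'nski analysis that you yourself flag as the main technical obstacle is precisely what the paper avoids redoing: in the only case where it is needed ($W\equiv 0$, Ricci tensor with two distinct eigenvalues, parallel or not), the reduction to Theorem 1.8, and through it to the de Rham decomposition and Derdzi\'nski's classification of compact manifolds with harmonic curvature and non-parallel Ricci tensor, already packages that work. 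Without the $W\equiv 0$ reduction, your construction of the parallel line field and the verification of the warped-product structure remain a genuine gap.
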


Based on Lemma 2.4, using the same argument as in the proof of Theorem 1.1, we can prove Theorem 1.12.

\begin{theorem}
Let $M$  be an $n(\geq 4)$-dimensional   compact Riemannian manifold  with harmonic curvature and  positive scalar curvature. Then
$$\int_M|W|[R-\sqrt{(n-1)n}|\mathring{Ric}|-\frac{C_2(n)n}{2}|W|]\leq 0$$
and equality occurs if and only if
$M$ is  locally conformal flat.
\end{theorem}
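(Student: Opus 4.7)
The plan is to adapt the Bochner-Weitzenböck argument of Theorem~1.1, with the Weyl tensor $W$ in the role of the traceless Ricci tensor $\mathring{Ric}$. The structural observation is that harmonic curvature forces the scalar curvature to be constant and the Ricci tensor to be Codazzi, so the Cotton tensor $C$ vanishes identically. Since $\delta W = \tfrac{n-3}{n-2}\,C$ for $n \geq 4$, this yields $\delta W = 0$: the Weyl tensor itself is divergence-free, a fact that makes $|W|^2$ amenable to a Weitzenböck treatment parallel to the Codazzi tensor $\mathring{Ric}$ in Theorem~1.1.

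With $\delta W = 0$, the Weitzenböck formula for the Weyl tensor produces an identity of the schematic form
\begin{equation*}
\tfrac{1}{2}\Delta|W|^2 \;=\; |\nabla W|^2 \;+\; \tfrac{R}{n-1}|W|^2 \;+\; \mathcal{Q}_1(\mathring{Ric}, W, W) \;+\; \mathcal{Q}_2(W, W, W),
\end{equation*}
where $\mathcal{Q}_1$ is quadratic in $W$ and linear in $\mathring{Ric}$, and $\mathcal{Q}_2$ is cubic in $W$. I would apply Lemma~2.4 to bound the cubic term by $|\mathcal{Q}_2| \leq \tfrac{C_2(n)\,n}{2(n-1)}|W|^3$, and use a Cauchy-Schwarz estimate exploiting the trace-free property of $\mathring{Ric}$ to obtain $|\mathcal{Q}_1| \leq \sqrt{\tfrac{n}{n-1}}\,|\mathring{Ric}|\,|W|^2$. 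Combining these bounds with Kato's inequality $|\nabla|W||^2 \leq |\nabla W|^2$ and rewriting $\tfrac{1}{2}\Delta|W|^2 = |W|\,\Delta|W| + |\nabla|W||^2$ transforms the Bochner identity into the pointwise inequality
\begin{equation*}
|W|\,\Delta|W| \;\geq\; \tfrac{|W|^2}{n-1}\Big[R - \sqrt{(n-1)n}\,|\mathring{Ric}| - \tfrac{C_2(n)\,n}{2}|W|\Big].
\end{equation*}

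Dividing by $|W|$ on the open set $\{|W|>0\}$ (with a standard $\varepsilon$-regularization $(|W|^2+\varepsilon)^{1/2}$ to handle the zero set) and integrating over $M$, the left-hand side integrates to $\int_M \Delta|W| = 0$, which yields the claimed inequality after multiplying by $n-1$. For the equality case, equality in the integral forces each pointwise estimate to saturate: Kato's inequality must hold as an equality, the Cauchy-Schwarz bound on $\mathcal{Q}_1$ must be sharp, and the Lemma~2.4 bound on $\mathcal{Q}_2$ must be attained, all with the correct signs. The rigidity conditions from Lemma~2.4, combined with these equalities, force $W \equiv 0$ on $M$, so $M$ is locally conformally flat. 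The main technical obstacle is deriving the exact form of the Weitzenböck identity and tracking the constants so that the coefficients $\sqrt{(n-1)n}$ and $\tfrac{C_2(n)\,n}{2}$ emerge precisely; once these pointwise bounds are established, the integration and rigidity steps parallel the proof of Theorem~1.1.
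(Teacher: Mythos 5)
Your proposal follows essentially the same route as the paper: harmonic curvature gives $\delta W=0$, Lemma~2.4 provides the Weitzenb\"ock inequality for $|W|^2$, and then the plain Kato inequality together with the $\epsilon$-regularized division by $|W|$ and integration (exactly as in the proof of Theorem~1.2) yields the integral inequality. One caution: the intermediate constants you quote are not the ones Lemma~2.4 actually delivers --- the correct identity carries $\frac{2R}{n}|W|^2$ (not $\frac{R}{n-1}|W|^2$), the Ricci coupling term is bounded by $2\sqrt{\tfrac{n-1}{n}}\,|\mathring{Ric}|\,|W|^2$ (your claimed $\sqrt{\tfrac{n}{n-1}}\,|\mathring{Ric}|\,|W|^2$ is \emph{smaller} than Huisken's sharp estimate and hence not justified), and the cubic term is $C_2(n)|W|^3$; these slips happen to conspire so that the bracket $R-\sqrt{n(n-1)}|\mathring{Ric}|-\tfrac{C_2(n)n}{2}|W|$ comes out the same, only with prefactor $\tfrac{2}{n}$ instead of your $\tfrac{1}{n-1}$, which is immaterial after integration. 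For the equality case you should also record the trivial converse ($W\equiv 0$ makes the integrand vanish identically) and justify, via the equality discussion in (2.22), why the alternative rigidity branch of Lemma~2.4 cannot persist on the set where $W\neq 0$.
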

\begin{remark} G. Catino \cite{Ca}  classifies compact conformally
at $n$-dimensional manifolds with constant
positive scalar curvature and satisfying an optimal integral pinching condition.
\end{remark}

\begin{remark} When $n=4$, we \cite{F} extend to four-manifolds with harmonic Weyl tensor Theorems 1.4 and 1.8, and corollary 1.6.
\end{remark}

We  follow their methods \cite{{Ca},{Ca2},{FX2},{HL}} to prove these theorems.


\section{Proofs of Lemmas}
In what follows, we adopt, without further comment, the moving frame notation with respect to a chosen local orthonormal frame.

Let $M$ be a  Riemannian manifold with harmonic curvature. The
decomposition of the Riemannian curvature tensor  into irreducible components yield
\begin{eqnarray*}
R_{ijkl}&=&W_{ijkl}+\frac{1}{n-2}(R_{ik}\delta_{jl}-R_{il}\delta_{jk}+R_{jl}\delta_{ik}-R_{jk}\delta_{il})\nonumber\\
&&-\frac{R}{(n-1)(n-2)}(\delta_{ik}\delta_{jl}-\delta_{il}\delta_{jk})\nonumber\\
&=&W_{ijkl}+\frac{1}{n-2}(\mathring{R}_{ik}\delta_{jl}-\mathring{R}_{il}\delta_{jk}+\mathring{R}_{jl}\delta_{ik}-\mathring{R}_{jk}\delta_{il})\nonumber\\
&&+\frac{R}{n(n-1)}(\delta_{ik}\delta_{jl}-\delta_{il}\delta_{jk}),
\end{eqnarray*}
where $R_{ijkl}$, $W_{ijkl}$, $R_{ij}$  and $\mathring{R}_{ij}$  denote the components of $Rm$, the Weyl curvature tensor $W$,  the Ricci tensor $Ric$ and the trace-free Ricci tensor $\mathring{Ric}=Ric-\frac{R}{n}g$, respectively,  and  $R$  is the scalar curvature.

The trace-free Riemannian curvature tensor $\mathring{Rm}$ is
\begin{equation}\mathring{R}_{ijkl}={R}_{ijkl}-\frac{R}{n(n-1)}(\delta_{ik}\delta_{jl}-\delta_{il}\delta_{jk}).\end{equation}
Then the following equalities are easily obtained
from the properties of curvature tensor:
\begin{equation}g^{ik}\mathring{R}_{ijkl}=\mathring{R}_{jl},\end{equation}
\begin{equation}\mathring{R}_{ijkl}+\mathring{R}_{iljk}+\mathring{R}_{iklj}=0,\end{equation}
\begin{equation}\mathring{R}_{ijkl}=\mathring{R}_{klij}=-\mathring{R}_{jikl}=-\mathring{R}_{ijlk},\end{equation}
\begin{equation}|\mathring{Rm}|^2=|W|^2+\frac{4}{n-2}|\mathring{Ric}|^2.\end{equation}
Moreover, by the assumption of harmonic curvature,
we compute
\begin{equation}
\mathring{R}_{ijkl,m}+\mathring{R}_{ijmk,l}+\mathring{R}_{ijlm,k}=0,
\end{equation}
\begin{equation}
\mathring{R}_{ijkl,l}=0,
\end{equation}
\begin{equation}
\mathring{W}_{ijkl,m}+\mathring{W}_{ijmk,l}+\mathring{W}_{ijlm,k}=0
\end{equation}
and
\begin{equation}
\mathring{W}_{ijkl,l}=0,
\end{equation}

Now, we compute the Laplacian of $|\mathring{Rm}|^2$, $|\mathring{Ric}|^2$ and $|\mathring{W}|^2$, respectively.
\begin{lemma}
Let $M$  be a complete  Riemannian $n$-manifold
with harmonic curvature and positive scalar curvature.   Then
\begin{equation}
\triangle|\mathring{Rm}|^2\geq2|\nabla \mathring{Rm}|^2-2C_1(n)|\mathring{Rm}|^3+2\frac{(n-2)R}{n(n-1)}|W|^2+2\frac{R}{n-1}|\mathring{Rm}|^2,
\end{equation}
where $C_1(n)=2[\frac{2(n^2+n-4)}{\sqrt{(n-1)n(n+1)(n+2)}}+ \frac{n^2-n-4}{2\sqrt{(n-2)(n-1)n(n+1)}}+\sqrt{\frac{(n-2)(n-1)}{4n}}]$.
\end{lemma}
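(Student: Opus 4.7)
The plan is to start from the Bochner identity
\[
\tfrac{1}{2}\Delta|\mathring{Rm}|^2 = |\nabla \mathring{Rm}|^2 + \langle \mathring{Rm}, \Delta \mathring{Rm}\rangle,
\]
derive a Weitzenb\"ock-type formula for $\Delta \mathring{Rm}$ out of the harmonic-curvature hypothesis, and then reduce the estimate to a purely algebraic bound on a cubic expression in the curvature.

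First I would exploit identities (2.6)--(2.7): differentiating (2.6) in $m$ and summing gives
\[
\mathring{R}_{ijkl,mm}=-\mathring{R}_{ijmk,lm}-\mathring{R}_{ijlm,km},
\]
and since (2.7) together with the antisymmetry in the last two indices forces $\mathring{R}_{ijmk,m}=-\mathring{R}_{ijkm,m}=0$, commuting the two derivatives on the right via the Ricci identity produces only curvature-quadratic terms, with no leftover divergence contribution. Decomposing those terms by (2.1) separates a piece linear in the scalar curvature from a piece quadratic in $\mathring{Rm}$, yielding a Lichnerowicz/Simons-type formula of the shape
\[
\Delta\mathring{R}_{ijkl}=\tfrac{R}{n-1}\mathring{R}_{ijkl}+\tfrac{(n-2)R}{n(n-1)}W_{ijkl}+Q_{ijkl},
\]
with $Q_{ijkl}$ quadratic in the curvature. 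Contracting with $\mathring{R}^{ijkl}$, the linear terms produce exactly $\tfrac{R}{n-1}|\mathring{Rm}|^2+\tfrac{(n-2)R}{n(n-1)}|W|^2$ (the second equality using $\langle W,\mathring{Rm}\rangle=|W|^2$, which follows from tracelessness of $W$), while $\langle\mathring{Rm},Q\rangle$ is the cubic-in-curvature term that must still be estimated.

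The heart of the argument is then to bound this cubic term from below by $-C_1(n)|\mathring{Rm}|^3$. I would use the orthogonal decomposition coming from (2.5) to split $\mathring{Rm}$ into its Weyl part $W$ and its Ricci part $\tfrac{1}{n-2}\mathring{Ric}\circledwedge g$, so that the cubic contraction breaks into a pure Weyl piece $W*W*W$, mixed cross pieces of type $W*\mathring{Ric}*\mathring{Ric}$, and a pure Ricci piece $\mathring{Ric}*\mathring{Ric}*\mathring{Ric}$. For each I would invoke a sharp algebraic inequality: (i) Huisken's optimal bound for the Weyl self-coupling, which supplies the constant $\tfrac{2(n^{2}+n-4)}{\sqrt{(n-1)n(n+1)(n+2)}}$; (ii) a Cauchy--Schwarz-type estimate for the cross contractions yielding $\tfrac{n^{2}-n-4}{2\sqrt{(n-2)(n-1)n(n+1)}}$; (iii) Okumura's lemma applied to the symmetric trace-free tensor $\mathring{Ric}$, contributing $\sqrt{\tfrac{(n-2)(n-1)}{4n}}$. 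Summing the three contributions and using $|\mathring{Rm}|^2=|W|^2+\tfrac{4}{n-2}|\mathring{Ric}|^2$ to dominate every product $|W|^{a}|\mathring{Ric}|^{b}$ by $|\mathring{Rm}|^{a+b}$ then produces the claimed constant $C_1(n)$.

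I expect the main obstacle to be coordinating these three sharp constants so that they combine cleanly into $C_1(n)$ without slack: each of (i)--(iii) requires a specific algebraic normal form (an eigenframe for $\mathring{Ric}$ in Okumura's case, an orthogonal decomposition of algebraic Weyl tensors for Huisken's), and the cross terms demand careful bookkeeping of which index contractions actually survive after using Weyl tracelessness and the antisymmetries (2.3)--(2.4). The scalar-curvature part of (2.10), by contrast, falls out directly from the Weitzenb\"ock computation and causes no difficulty.
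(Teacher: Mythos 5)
Your Weitzenb\"ock computation matches the paper's: the Bochner formula, the use of the second Bianchi identity (2.6)--(2.7) to trade $\mathring{R}_{ijkl,mm}$ for $2\mathring{R}_{ijkm,lm}$, the Ricci identity with the divergence term killed by (2.7), and the extraction of the linear-in-$R$ part $\frac{(n-2)R}{n(n-1)}|W|^2+\frac{R}{n-1}|\mathring{Rm}|^2$ via (2.1) and (2.5) are all exactly what the paper does. The gap is in the cubic estimate, which is where $C_1(n)$ actually lives. The paper does \emph{not} split $\mathring{Rm}$ into Weyl and Ricci parts there. It keeps the cubic term in the form $2\mathring{R}_{ijlk}\mathring{R}_{jhkm}\mathring{R}_{himl}+\frac 12\mathring{R}_{ijlk}\mathring{R}_{hmij}\mathring{R}_{lkhm}+\mathring{R}_{ijkl}\mathring{R}_{ijkh}\mathring{R}_{hl}$ and bounds the first two \emph{self-contractions of the full tensor} $\mathring{Rm}$, regarded as a self-adjoint operator on $S^2V$ and $\wedge^2V$, by Huisken's algebraic inequalities; these supply the first two summands $\frac{2(n^2+n-4)}{\sqrt{(n-1)n(n+1)(n+2)}}$ and $\frac{n^2-n-4}{2\sqrt{(n-2)(n-1)n(n+1)}}$ of $C_1(n)/2$. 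The third summand $\sqrt{\frac{(n-2)(n-1)}{4n}}$ comes from $|\mathring{R}_{ijkl}\mathring{R}_{ijkh}\mathring{R}_{hl}|\leq\sqrt{\frac{n-1}{n}}|\mathring{Ric}||\mathring{Rm}|^2$ (the eigenvalue bound for the trace-free symmetric tensor $\mathring{Ric}$) combined with $|\mathring{Ric}|^2\leq\frac{n-2}{4}|\mathring{Rm}|^2$ from (2.5). So your attribution of the three constants to ``Weyl self-coupling'', ``Cauchy--Schwarz on $W*\mathring{Ric}*\mathring{Ric}$ cross terms'', and ``Okumura applied to $\mathrm{tr}(\mathring{Ric}^3)$'' is wrong: no $\mathrm{tr}(\mathring{Ric}^3)$ term occurs in this computation (that term belongs to Lemma 2.3 for $\triangle|\mathring{Ric}|^2$), and the second constant is not a cross-term estimate but the second of Huisken's two operator bounds applied to $\mathring{Rm}$ itself.

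This matters because your proposed route would not deliver the stated constant. Decomposing $\mathring{Rm}=W+\frac{1}{n-2}\mathring{Ric}\circledwedge g$ inside the cubic contraction produces a sum of terms $|W|^a|\mathring{Ric}|^b$ with $a+b=3$, each with its own sharp constant (essentially those appearing in Lemmas 2.3--2.5); after dominating every product by $|\mathring{Rm}|^3$ the constants do not reassemble into $C_1(n)$ and the result is generically weaker. The obstacle you flag at the end --- ``coordinating these three sharp constants so that they combine cleanly into $C_1(n)$ without slack'' --- is not a bookkeeping nuisance but a genuine obstruction to your decomposition; the paper avoids it by never decomposing, applying Huisken's bounds directly to $\mathring{Rm}$ and handling only the single Ricci-contracted term separately.
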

\begin{remark} Although Lemma 2.1 has been proved in \cite{FX}, we give its proof.
When $M$ is a  complete locally
conformally flat Riemannian $n$-manifold, it follows from (2.12) that
\begin{equation*} \triangle|\mathring{Ric}|^2\geq2|\nabla \mathring{Ric}|^2-2\frac{n}{\sqrt{n(n-1)}}|\mathring{Ric}|^3+2\frac{R}{n-1}|\mathring{Ric}|^2.\end{equation*}
By the Kato inequality $|\nabla \mathring{Ric}|^2\geq \frac{n+2}{n}|\nabla |\mathring{Ric}||^2$, we obtain ( see \cite{{PRS},{XZ}})
\begin{equation} |\mathring{Ric}|\triangle|\mathring{Ric}|\geq\frac{2}{n}|\nabla |\mathring{Ric}||^2-\frac{n}{\sqrt{n(n-1)}}|\mathring{Ric}|^3+\frac{R}{n-1}|\mathring{Ric}|^2.\end{equation}
\end{remark}
\begin{proof}
By the Ricci identities, we obtain from (2.1)-(2.7)
\begin{equation}
\begin{split}
\triangle|\mathring{Rm}|^2&=2|\nabla \mathring{Rm}|^2+2\langle \mathring{Rm}, \triangle \mathring{Rm}\rangle=2|\nabla \mathring{Rm}|^2+2\mathring{R}_{ijkl}\mathring{R}_{ijkl,mm}\\
&=2|\nabla \mathring{Rm}|^2+2\mathring{R}_{ijkl}(\mathring{R}_{ijkm,lm}+\mathring{R}_{ijml,km})\\
&=2|\nabla \mathring{Rm}|^2+4\mathring{R}_{ijkl}\mathring{R}_{ijkm,lm}\\
&=2|\nabla \mathring{Rm}|^2+4\mathring{R}_{ijkl}(\mathring{R}_{ijkm,ml}
+\mathring{R}_{hjkm}R_{hilm}\\
&+\mathring{R}_{ihkm}R_{hjlm}+\mathring{R}_{ijhm}R_{hklm}
+\mathring{R}_{ijkh}R_{hmlm})\\
&=2|\nabla \mathring{Rm}|^2+4\mathring{R}_{ijkl}(\mathring{R}_{hjkm}R_{hilm}
+\mathring{R}_{ihkm}R_{hjlm}\\
&+\mathring{R}_{ijhm}R_{hklm}
+\mathring{R}_{ijkh}R_{hmlm})\\
&=2|\nabla \mathring{Rm}|^2+4\mathring{R}_{ijkl}(\mathring{R}_{hjkm}\mathring{R}_{hilm}
+\mathring{R}_{ihkm}\mathring{R}_{hjlm}
+\mathring{R}_{ijhm}\mathring{R}_{hklm}\\
&+\mathring{R}_{ijkh}\mathring{R}_{hmlm})
+\frac{4R}{n(n-1)}\mathring{R}_{ijkl}(\mathring{R}_{ljki}+\mathring{R}_{ilkj}+\mathring{R}_{ijlk}\\
&+\mathring{R}_{jk}\delta_{il}-\mathring{R}_{ik}\delta_{jl})+\frac{4R}{n}|\mathring{Rm}|^2\\
&=2|\nabla \mathring{Rm}|^2+4\mathring{R}_{ijkl}(\mathring{R}_{hjkm}\mathring{R}_{hilm}
+\mathring{R}_{ihkm}\mathring{R}_{hjlm}
+\mathring{R}_{ijhm}\mathring{R}_{hklm}\\
&+\mathring{R}_{ijkh}\mathring{R}_{hl})
-\frac{8R}{n(n-1)}|\mathring{Ric}|^2+\frac{4R}{n}|\mathring{Rm}|^2\\
&=2|\nabla \mathring{Rm}|^2-4\mathring{R}_{ijlk}(2\mathring{R}_{jhkm}\mathring{R}_{himl}
+\frac 12\mathring{R}_{hmij}\mathring{R}_{lkhm}\\
&+\mathring{R}_{ijkh}\mathring{R}_{hl})
+\frac{2(n-2)R}{n(n-1)}|W|^2+\frac{2R}{n-1}|\mathring{Rm}|^2.
\end{split}
\end{equation}

We consider $\mathring{Rm}$ as a self adjoint operator on $\wedge^2 V$ and $S^2 V$, respectively. By the algebraic inequality for $m$-trace-free symmetric two-tensors $T$, i.e., $tr(T^3)\leq\frac{m-2}{\sqrt{m(m-1)}}|T|^3$ and equality holds if and only if $T$ can be diagonalized  with
$(n-1)$-eigenvalues equal to $\lambda$ and one eigenvalue equals to $-(n-1)\lambda$, and the eigenvalues $\lambda_i$ of $T$ satisfy $|\lambda_i|\leq\sqrt{\frac{m-1}{m}}|T|$ in \cite{Hu}, we obtain
\begin{equation}
\begin{split}|\mathring{R}_{ijlk}(2\mathring{R}_{jhkm}\mathring{R}_{himl}
+\frac 12\mathring{R}_{hmij}\mathring{R}_{lkhm})|
\leq2|\mathring{R}_{ijlk}\mathring{R}_{jhkm}\mathring{R}_{himl}|
+\frac 12|\mathring{R}_{ijlk}\mathring{R}_{hmij}\mathring{R}_{lkhm}|\\
\leq[\frac{2(n^2+n-4)}{\sqrt{(n-1)n(n+1)(n+2)}}+ \frac{n^2-n-4}{2\sqrt{(n-2)(n-1)n(n+1)}}]|\mathring{Rm}|^3,
\end{split}
\end{equation}
and
\begin{eqnarray}|\mathring{R}_{ijkl}\mathring{R}_{ijkh}\mathring{R}_{hl}|\leq\sqrt{\frac{n-1}{n}}|\mathring{Ric}||\mathring{Rm}|^2.
\end{eqnarray}
From (2.5), we have
\begin{equation}|\mathring{Ric}|^2\leq \frac{n-2}{4}|\mathring{Rm}|^2.
\end{equation}
Combining  with (2.12)-(2.15), we obtain  that
\begin{equation*}
\begin{split}
\triangle|\mathring{Rm}|^2\geq 2|\nabla \mathring{Rm}|^2+\frac{2(n-2)R}{n(n-1)}|W|^2+\frac{2R}{n-1}|\mathring{Rm}|^2-4[\sqrt{\frac{(n-2)(n-1)}{4n}}
\\+\frac{2(n^2+n-4)}{\sqrt{(n-1)n(n+1)(n+2)}}+ \frac{n^2-n-4}{2\sqrt{(n-2)(n-1)n(n+1)}}]|\mathring{Rm}|^3.
\end{split}
\end{equation*}
This completes the proof of this Lemma.
\end{proof}

\begin{lemma}
Let $M$  be a complete  Riemannian $n$-manifold
with harmonic curvature.   Then
\begin{equation}
\triangle|\mathring{Ric}|^2\geq2|\nabla \mathring{Ric}|^2-2\sqrt{\frac{n-2}{2(n-1)}}|W||\mathring{Ric}|^2-2\sqrt{\frac{n}{n-1}}|\mathring{Ric}|^3+2\frac {R} {n-1}|\mathring{Ric}|^2.
\end{equation}
\end{lemma}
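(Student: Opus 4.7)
The plan is to run a Bochner-type argument for $\mathring{Ric}$, closely parallel to the proof of Lemma~2.1 for $\mathring{Rm}$, and then absorb the resulting curvature terms with two standard algebraic inequalities. The key structural input is that harmonic curvature forces the scalar curvature to be constant and makes $\mathring{Ric}$ a Codazzi tensor: $\mathring{R}_{ij,k}=\mathring{R}_{ik,j}$ together with $\mathring{R}_{ik,k}=0$.

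First I would write Bochner's identity
$$\tfrac{1}{2}\Delta|\mathring{Ric}|^2=|\nabla\mathring{Ric}|^2+\mathring{R}_{ij}\,\Delta\mathring{R}_{ij}.$$
Commuting covariant derivatives via the Ricci identity and using the Codazzi property yields
$$\Delta\mathring{R}_{ij}=\mathring{R}_{ik,jk}=\mathring{R}_{ik,kj}+R_{hijk}\mathring{R}_{hk}+R_{hj}\mathring{R}_{ih}=R_{hijk}\mathring{R}_{hk}+R_{hj}\mathring{R}_{ih}.$$
Next I would substitute the irreducible decomposition of $R_{hijk}$ from the beginning of Section~2 into the contraction $\mathring{R}_{ij}\mathring{R}_{hk}R_{hijk}$, and separately write $R_{hj}=\mathring{R}_{hj}+\tfrac{R}{n}\delta_{hj}$. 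A direct index computation (the $\delta$-traces collapse the Ricci part into two identical cubic pieces, and the scalar part reduces to $|\mathring{Ric}|^2$) produces
$$\tfrac{1}{2}\Delta|\mathring{Ric}|^2=|\nabla\mathring{Ric}|^2+W_{hijk}\mathring{R}_{ij}\mathring{R}_{hk}+\tfrac{n}{n-2}\operatorname{tr}(\mathring{Ric}^{\,3})+\tfrac{R}{n-1}|\mathring{Ric}|^2.$$

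The final step is to bound the cubic and Weyl terms. Okumura's algebraic inequality for trace-free symmetric two-tensors (the same inequality used in Lemma~2.1) gives $|\operatorname{tr}(\mathring{Ric}^{\,3})|\le\tfrac{n-2}{\sqrt{n(n-1)}}|\mathring{Ric}|^3$, which turns the cubic term into $\sqrt{n/(n-1)}\,|\mathring{Ric}|^3$ after multiplication by $\tfrac{n}{n-2}$ and by $2$. The Weyl coupling is controlled by the Huisken--Bour--Catino algebraic inequality
$$\bigl|W_{hijk}\mathring{R}_{ij}\mathring{R}_{hk}\bigr|\le\sqrt{\tfrac{n-2}{2(n-1)}}\,|W|\,|\mathring{Ric}|^2,$$
after which doubling the identity produces (2.17). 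The main obstacle is precisely this last inequality; the sharp constant is obtained by viewing the Kulkarni--Nomizu product of $\mathring{Ric}$ with itself as an algebraic curvature tensor, projecting it onto the Weyl summand of $S^2(\Lambda^2)$, computing its norm in closed form, and then applying Cauchy--Schwarz against $W$.
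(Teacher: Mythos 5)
Your proposal is correct and follows essentially the same route as the paper's proof: the Bochner formula, the Ricci identity combined with the Codazzi property of $\mathring{Ric}$ to reduce $\Delta\mathring{R}_{ij}$ to the curvature contractions $\mathring{R}_{li}R_{lkjk}+\mathring{R}_{kl}R_{lijk}$, substitution of the irreducible decomposition of $Rm$ to arrive at the identity $\tfrac12\Delta|\mathring{Ric}|^2=|\nabla\mathring{Ric}|^2+W_{kijl}\mathring{R}_{ij}\mathring{R}_{kl}+\tfrac{n}{n-2}\operatorname{tr}(\mathring{Ric}^3)+\tfrac{R}{n-1}|\mathring{Ric}|^2$, and finally the Okumura bound on the cubic trace together with Huisken's estimate $|W_{kijl}\mathring{R}_{ij}\mathring{R}_{kl}|\leq\sqrt{\tfrac{n-2}{2(n-1)}}|W||\mathring{Ric}|^2$ (which the paper simply cites rather than reproves). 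The only cosmetic difference is that you kill the term $\mathring{R}_{ik,kj}$ via divergence-freeness while the paper uses the Codazzi property once more to reduce it to $\mathring{R}_{kk,ij}=0$; these are equivalent under the hypotheses.
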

\begin{proof}
We compute
\begin{equation}
\triangle|\mathring{Ric}|^2=2|\nabla \mathring{Ric}|^2+2\langle \mathring{Ric}, \triangle \mathring{Ric}\rangle=2|\nabla \mathring{Ric}|^2+2\mathring{R}_{ij}\mathring{R}_{ij,kk}.
\end{equation}
Since the traceless Ricci tensor is  Codazzi, by the Ricci identities, we obtain
\begin{equation}
\begin{split} \mathring{R}_{ij,kk}&=\mathring{R}_{ik,jk}
=\mathring{R}_{ki,kj}+\mathring{R}_{li}R_{lkjk}+\mathring{R}_{kl}R_{lijk}\\
&=\mathring{R}_{kk,ij}+\mathring{R}_{li}R_{lkjk}+\mathring{R}_{kl}R_{lijk}\\
&=\mathring{R}_{li}R_{lkjk}+\mathring{R}_{kl}R_{lijk},
\end{split}
\end{equation}
which gives
\begin{equation}
\triangle|\mathring{Ric}|^2=2|\nabla \mathring{Ric}|^2+2\mathring{R}_{ij}\mathring{R}_{ij,kk}=2|\nabla \mathring{Ric}|^2+2\mathring{R}_{ij}\mathring{R}_{li}R_{lkjk}+2\mathring{R}_{ij}\mathring{R}_{kl}R_{lijk}.\end{equation}
We compute
\begin{equation}
\triangle|\mathring{Ric}|^2=2|\nabla \mathring{Ric}|^2+2W_{kijl}\mathring{R}_{ij}\mathring{R}_{kl}+2\frac{n}{n-2}\mathring{R}_{ij}\mathring{R}_{jl}\mathring{R}_{li}
+2\frac{R}{n-1}|\mathring{Ric}|^2.
\end{equation}
By the inequality $|W_{kijl}\mathring{R}_{ij}\mathring{R}_{kl}|\leq\sqrt{\frac{n-2}{2(n-1)}}|W||\mathring{Ric}|^2$ given in \cite{Hu}, from (2.20) we get
\begin{equation*}
\triangle|\mathring{Ric}|^2\geq2|\nabla \mathring{Ric}|^2-2\sqrt{\frac{n-2}{2(n-1)}}|W||\mathring{Ric}|^2-2\sqrt{\frac{n}{n-1}}|\mathring{Ric}|^3+2\frac {R}{n-1}|\mathring{Ric}|^2.
\end{equation*}
This completes the proof of this Lemma.
\end{proof}

\begin{lemma}
Let $M$  be a complete  Riemannian $n$-manifold
with harmonic curvature.      Then
\begin{eqnarray}
\triangle|W|^2\geq2|\nabla W|^2-2C_2(n)|W|^3-4\sqrt{\frac{n-1}{n}}|W|^2|\mathring{Ric}|+\frac{4R}{n}|W|^2,
\end{eqnarray}
where\begin{equation*}C_2(n)=
\begin{cases}\frac{\sqrt{6}}{2}, \quad n=4 \\\frac{8\sqrt{10}}{15}, \quad n=5\\\frac{4(n^2+n-4)}{\sqrt{(n-1)n(n+1)(n+2)}}+ \frac{n^2-n-4}{\sqrt{(n-2)(n-1)n(n+1)}}, \quad  n\geq6.
\end{cases}
\end{equation*}
\end{lemma}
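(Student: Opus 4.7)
The plan is to follow exactly the blueprint of the proof of Lemma 2.1, but with the Weyl tensor $W$ in place of the full trace-free Riemann tensor $\mathring{Rm}$. The only structural input we need is the second Bianchi-type identity (2.8) for $W$ and the divergence-free condition (2.9), both of which are already available under the harmonic-curvature hypothesis.

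First I start from
$$
\triangle |W|^2 = 2|\nabla W|^2 + 2\,W_{ijkl}\,W_{ijkl,mm}.
$$
Rewriting $W_{ijkl,m}$ via (2.8), differentiating once more, commuting the two derivatives by the Ricci identity, and using (2.9) to kill the $\nabla(\mathrm{div}\,W)$-type terms, I obtain a formula
$$
\triangle |W|^2 = 2|\nabla W|^2 + 4\,W_{ijkl}\bigl(W_{hjkm}R_{hilm}+W_{ihkm}R_{hjlm}+W_{ijhm}R_{hklm}+W_{ijkh}R_{hl}\bigr),
$$
structurally identical to the one displayed in the proof of Lemma 2.1.

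Next I substitute the decomposition of $Rm$ given at the beginning of Section 2 into each quadratic-in-$W$ contraction above. Because $W$ is totally trace-free, every term in which the $\mathring{Ric}\circledwedge g$ block contracts a free index of $W$ through a Kronecker delta either vanishes or collapses to a single $W\!*\!W\!*\!\mathring{Ric}$ expression; the pure scalar block, after using the first Bianchi identity and the trace-freeness of $W$, collects into $\frac{4R}{n}|W|^2$. What remains is a pure cubic term in $W$, coming from the $W$-part of $Rm$, and a mixed $W\!*\!W\!*\!\mathring{Ric}$ term.

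To finish I bound these two remaining terms. The mixed term is controlled fibrewise by the spectral estimate $|W_{ijkh}\mathring{R}_{hl}|\le \sqrt{(n-1)/n}\,|W||\mathring{Ric}|$, exactly as in (2.14), producing the summand $-4\sqrt{(n-1)/n}\,|W|^2|\mathring{Ric}|$. The cubic-in-$W$ bound is the main obstacle, and it is here that the dimensional cases split. For $n\ge 6$ one views $W$ as a self-adjoint trace-free operator both on $\wedge^2V$ (of dimension $\binom{n}{2}$) and on $S^2V$, and applies Huisken's inequality $\mathrm{tr}(T^3)\le \tfrac{m-2}{\sqrt{m(m-1)}}|T|^3$ for $m$-trace-free symmetric two-tensors; the two kinds of contractions in the cubic trace match the two summands appearing in the stated $C_2(n)$. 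For $n=4$ the generic Huisken bound is not sharp: one instead decomposes $W=W^++W^-$ into (anti)self-dual parts, each a trace-free symmetric endomorphism of a three-dimensional space, and uses the classical sharp estimate $|\mathrm{tr}((W^\pm)^3)|\le \tfrac{1}{\sqrt{6}}|W^\pm|^3$ to obtain $C_2(4)=\sqrt{6}/2$. For $n=5$ one performs the analogous dimension-specific sharp computation of the cubic trace of $W$ on $\wedge^2\mathbb{R}^5$, a $10$-dimensional representation, yielding $C_2(5)=8\sqrt{10}/15$. Collecting these three estimates with the two identified nonnegative contributions proves the inequality stated in the lemma.
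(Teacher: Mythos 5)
Your proposal follows the paper's proof essentially verbatim: the same Weitzenb\"ock computation from $\triangle|W|^2=2|\nabla W|^2+2W_{ijkl}W_{ijkl,mm}$ using (2.8)--(2.9) and the Ricci identity, the same spectral bound $|W_{ijkl}W_{ijkh}\mathring R_{hl}|\le\sqrt{(n-1)/n}\,|W|^2|\mathring{Ric}|$ for the mixed term, and the same dimension-split treatment of the cubic term (Huisken's trace inequality on $S^2V$ and $\wedge^2V$ for $n\ge6$, and for $n=4$ the self-dual/anti-self-dual decomposition that underlies Huisken's $\tfrac{\sqrt6}{4}|W|^3$ bound). The only spot where your sketch is thinner than the paper is $n=5$: to collapse $2W_{ijlk}W_{jhkm}W_{himl}+\tfrac12W_{ijkl}W_{hmij}W_{klhm}$ into a single cubic trace of $W$ acting on the $10$-dimensional space $\wedge^2V$ one needs the Jack--Parker identity $W_{ijkl}W_{hmij}W_{klhm}=4W_{ijlk}W_{jhkm}W_{himl}$, which the paper invokes explicitly and your ``analogous dimension-specific computation'' leaves implicit.
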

\begin{proof}
By the Ricci identities, we obtain form (2.8) and (2.9)
\begin{equation}
\begin{split}
\triangle|W|^2&=2|\nabla W|^2+2\langle W, \triangle W\rangle=2|\nabla W|^2+2W_{ijkl}W_{ijkl,mm}\\
&=2|\nabla W|^2+2W_{ijkl}(W_{ijkm,lm}+W_{ijml,km})\\
&=2|\nabla W|^2+4W_{ijkl}W_{ijkm,lm}\\
&=2|\nabla W|^2+4W_{ijkl}(W_{ijkm,ml}
+W_{hjkm}R_{hilm}+W_{ihkm}R_{hjlm}+W_{ijhm}R_{hklm}
+W_{ijkh}R_{hmlm})\\
&=2|\nabla W|^2+4W_{ijkl}(W_{hjkm}R_{hilm}
+W_{ihkm}R_{hjlm}+W_{ijhm}R_{hklm}
+W_{ijkh}R_{hmlm})\\
&=2|\nabla W|^2+4W_{ijkl}(W_{hjkm}W_{hilm}
+W_{ihkm}W_{hjlm}
+W_{ijhm}W_{hklm}
+W_{ijkh}W_{hmlm})\\
&
+\frac{4}{n-2}W_{ijkl}[W_{hjkm}(\mathring{R}_{hl}\delta_{im}-\mathring{R}_{hm}\delta_{il}+\mathring{R}_{im}\delta_{hl}-\mathring{R}_{il}\delta_{hm})\\
&+W_{ihkm}(\mathring{R}_{hl}\delta_{jm}-\mathring{R}_{hm}\delta_{jl}+\mathring{R}_{jm}\delta_{hl}-\mathring{R}_{jl}\delta_{hm})+W_{ijhm}(\mathring{R}_{hl}\delta_{km}-\mathring{R}_{hm}\delta_{kl}+\mathring{R}_{km}\delta_{hl}-\mathring{R}_{kl}\delta_{hm})\\
&+W_{ijkh}(\mathring{R}_{hl}\delta_{mm}-\mathring{R}_{hm}\delta_{ml}+\mathring{R}_{mm}\delta_{hl}-\mathring{R}_{ml}\delta_{hm})]
+\frac{4R}{n(n-1)}W_{ijkl}(W_{ljki}+W_{ilkj}+W_{ijlk})
+\frac{4R}{n}|W|^2\\
&=2|\nabla W|^2+4W_{ijkl}(2W_{hjkm}W_{hilm}
-\frac 12W_{ijhm}W_{klhm})+\frac{4R}{n}|W|^2+4W_{ijkl}W_{ijkh}\mathring{R}_{hl}\\
&\geq2|\nabla W|^2-4(2W_{ijlk}W_{jhkm}W_{himl}
+\frac 12W_{ijkl}W_{hmij}W_{klhm})+\frac{4R}{n}|W|^2-4\sqrt{\frac{n-1}{n}}|W|^2|\mathring{Ric}|,
\end{split}
\end{equation}
and equality holds if and only if $W=0$ or $\mathring{Ric}$ can be diagonalized  with
$(n-1)$-eigenvalues equal to $\lambda$ and one eigenvalue equals to $-(n-1)\lambda$
and $(W_{ijk1}, \cdots, W_{ijkn})$ correspondingly takes the value $(-(n-1)\lambda, 0, \cdots, 0)$.

Case 1. When $n=4$, it was proved in \cite{Hu} that $$|2W_{ijlk}W_{jhkm}W_{himl}
+\frac 12W_{ijkl}W_{hmij}W_{klhm}|\leq\frac {\sqrt{6}}{4}|W|^3.$$

Case 2. When $n=5$,  Jack and  Parker \cite{JP} have proved that $W_{ijkl}W_{hmij}W_{klhm}=4W_{ijlk}W_{jhkm}W_{himl}$. We consider $W$ as a self adjoint operator on $\wedge^2 V$,  and obtain
\begin{eqnarray*}|2W_{ijlk}W_{jhkm}W_{himl}
+\frac 12W_{ijkl}W_{hmij}W_{klhm}|=|W_{ijkl}W_{hmij}W_{klhm}|\leq\frac {4\sqrt{10}}{15}|W|^3.\end{eqnarray*}

Case 3. When $n\geq 6$, considering $W$ as a self adjoint operator on $S^2 V$, we have
\begin{eqnarray*}|2W_{ijlk}W_{jhkm}W_{himl}
+\frac 12W_{ijkl}W_{hmij}W_{klhm}|\leq2|W_{ijlk}W_{jhkm}W_{himl}|+\frac 12|W_{ijkl}W_{hmij}W_{klhm}|\\
\leq[\frac{2(n^2+n-4)}{\sqrt{(n-1)n(n+1)(n+2)}}+ \frac{n^2-n-4}{2\sqrt{(n-2)(n-1)n(n+1)}}]|W|^3.\end{eqnarray*}

From (2.22) and Cases 1,2 and 3, we complete the proof of this Lemma.
\end{proof}
\begin{lemma}
On every $n$-dimensional Riemannian manifold the following estimate holds
\begin{eqnarray*}
\left|-W_{ijkl}\mathring{R}_{ik}\mathring{R}_{jl}+\frac{n}{n-2}\mathring{R}_{ij}\mathring{R}_{jk}\mathring{R}_{ki}\right|\leq
\sqrt{\frac{n-2}{2(n-1)}}\left(|W|^2+\frac{2n}{n-2}|\mathring{Ric}|^2\right)^{\frac12}|\mathring{Ric}|^2.
\end{eqnarray*}
\end{lemma}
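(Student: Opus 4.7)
The inequality is pointwise and algebraic; I work at a fixed point and set $T:=\mathring{Ric}$, $S:=|T|^2$, $K:=\mathring R_{ij}\mathring R_{jk}\mathring R_{ki}$, and $\tau:=\mathring R_{ij}\mathring R_{jk}\mathring R_{kl}\mathring R_{li}$. The plan is to bound the Weyl contraction $W_{ijkl}T_{ik}T_{jl}$ more sharply than Huisken's inequality alone permits, combine it with the sharp cubic-trace estimate via a weighted Cauchy--Schwarz, and reduce the result to a purely scalar inequality in the eigenvalues of $T$.

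From the algebraic symmetries of $W$ one has $W_{ijkl}T_{ik}T_{jl}=\frac{1}{4}\langle W,T\circledwedge T\rangle$; since $W$ is $L^2$-orthogonal to the Ricci and scalar irreducible pieces of any algebraic curvature tensor, Cauchy--Schwarz yields
\[
|W_{ijkl}T_{ik}T_{jl}|\le\frac{1}{4}\,|W|\,|(T\circledwedge T)_W|,
\]
where $(T\circledwedge T)_W$ denotes the Weyl component. A direct decomposition of $T\circledwedge T$ into its three irreducible pieces (its Ricci contraction equals $-2T^2$, and $|\mathring{Ric}\circledwedge g|^2=4(n-2)|\mathring{Ric}|^2$) gives
\[
|(T\circledwedge T)_W|^2=\frac{8(n^2-3n+3)}{(n-1)(n-2)}S^2-\frac{8n}{n-2}\tau.
\]

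Now combine with the sharp cubic-trace bound $|K|\le\frac{n-2}{\sqrt{n(n-1)}}\,S^{3/2}$ of \cite{Hu} and use the triangle inequality $|E|\le\frac{1}{4}|W|\,|(T\circledwedge T)_W|+\frac{n}{n-2}|K|$. Apply the weighted Cauchy--Schwarz inequality with weights $(1,\frac{2n}{n-2}S)$ to the pair $(|W|,1)$ versus $(\frac{1}{4}|(T\circledwedge T)_W|,\frac{n}{n-2}|K|)$: the first factor becomes $\sqrt{|W|^2+\frac{2n}{n-2}|\mathring{Ric}|^2}$, exactly as required. After substituting the formula for $|(T\circledwedge T)_W|^2$ and simplifying via the elementary identity $n^2-3n+3-(n-2)^2=n-1$, the stated inequality reduces to the scalar estimate
\[
S\tau-K^2\ge\frac{S^3}{n}.
\]
To prove this, let $\lambda_1,\dots,\lambda_n$ be the eigenvalues of $T$ (so $\sum\lambda_i=0$). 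For every $t\in\mathbb{R}$ the Cauchy--Schwarz inequality $\bigl(\sum_i(\lambda_i+t\lambda_i^2)\bigr)^2\le n\sum_i(\lambda_i+t\lambda_i^2)^2$ combined with $\sum\lambda_i=0$ yields $t^2(n\tau-S^2)+2ntK+nS\ge 0$ for all $t$. Since $n\tau\ge S^2$ by the power-mean inequality, the leading coefficient is nonnegative, and the nonnegative-discriminant condition $n^2K^2\le nS(n\tau-S^2)$ is precisely what is needed.

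The main difficulty is that combining the Huisken estimate $|W_{ijkl}T_{ik}T_{jl}|\le\sqrt{\frac{n-2}{2(n-1)}}|W||T|^2$ with the cubic-trace bound via Cauchy--Schwarz naively overshoots the desired constant by a factor of $\sqrt 2$. The savings come from the refined pointwise bound above, which is strictly sharper than Huisken's whenever $\tau>S^2/n$; the deficit $\frac{8n}{n-2}(\tau-S^2/n)$ in $|(T\circledwedge T)_W|^2$ is exactly what is absorbed by the cubic-trace contribution through the scalar inequality $S\tau-K^2\ge S^3/n$.
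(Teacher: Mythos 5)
Your argument is correct, and it is worth saying up front that the paper itself prints no proof of this lemma: Remark 2.6 simply defers to Proposition 2.1 of [Ca2] and Lemma 4.7 of [B2]. I verified your computation. With the paper's convention for $\circledwedge$ one indeed has $W_{ijkl}\mathring R_{ik}\mathring R_{jl}=\tfrac14\langle W,\mathring{Ric}\circledwedge \mathring{Ric}\rangle$, the Ricci contraction of $\mathring{Ric}\circledwedge\mathring{Ric}$ is $-2\mathring{Ric}^2$, $|\mathring{Ric}\circledwedge\mathring{Ric}|^2=8(S^2-\tau)$, and the orthogonal decomposition gives
\begin{equation*}
|(\mathring{Ric}\circledwedge\mathring{Ric})_W|^2=8(S^2-\tau)-\frac{16}{n-2}\Bigl(\tau-\frac{S^2}{n}\Bigr)-\frac{8}{n(n-1)}S^2
=\frac{8(n^2-3n+3)}{(n-1)(n-2)}S^2-\frac{8n}{n-2}\tau,
\end{equation*}
exactly as you claim; after the weighted Cauchy--Schwarz step the desired estimate reduces, via $n^2-3n+3-(n-2)^2=n-1$, precisely to $S\tau-K^2\ge S^3/n$, and your quadratic-in-$t$ discriminant argument proves that (in the degenerate case $n\tau=S^2$ the linear inequality forces $K=0$ and both sides vanish). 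So the proof is complete and self-contained, which is a genuine improvement over the paper, whose reader must reconstruct the argument from the two cited preprints; the underlying mechanism (reduce to an algebraic optimization coupling the Weyl contraction with $\operatorname{tr}(\mathring{Ric}^3)$) is the same in spirit as in those sources, but your packaging through the irreducible decomposition of $\mathring{Ric}\circledwedge\mathring{Ric}$ makes the appearance of the combined norm $|W|^2+\frac{2n}{n-2}|\mathring{Ric}|^2$ transparent. Three cosmetic points: (i) the sentence invoking Huisken's cubic-trace bound $|K|\le\frac{n-2}{\sqrt{n(n-1)}}S^{3/2}$ is a red herring --- that bound is never used in your chain of inequalities and should be deleted; (ii) the weighted Cauchy--Schwarz and the subsequent division by $S$ require $S>0$, so dispose of the trivial case $\mathring{Ric}=0$ first; (iii) ``nonnegative-discriminant condition'' should read ``nonpositive discriminant.''
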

\begin{remark}
We follow these proofs of Proposition 2.1 in \cite{Ca2} and Lemma 4.7 in \cite{B2} to  prove this lemma.
\end{remark}
\section{Proofs of Theorems}

\begin{proof}[{\bf Proof of Theorem
1.1}]
By the Kato inequality $|\nabla \mathring{Ric}|^2\geq \frac{n+2}{n}|\nabla |\mathring{Ric}||^2$ and Lemma 2.3, we obtain
\begin{equation}
|\mathring{Ric}|\triangle|\mathring{Ric}|\geq\frac{2}{n}|\nabla |\mathring{Ric}||^2-\sqrt{\frac{n-2}{2(n-1)}}|W||\mathring{Ric}|^2-\sqrt{\frac{n}{n-1}}|\mathring{Ric}|^3+\frac {R}{n-1}|\mathring{Ric}|^2
\end{equation}
in the sense of distributions. Using (3.1), we compute
\begin{equation}
\begin{split}
|\mathring{Ric}|^{\frac{n-2}{n}}\triangle |\mathring{Ric}|^{\frac{n-2}{n}}=|\mathring{Ric}|^{\frac{n-2}{n}}\left(\frac{-2(n-2)}{n^2}|\mathring{Ric}|^{\frac{-n-2}{n}}|\nabla |\mathring{Ric}||^2+\frac{n-2}{n}|\mathring{Ric}|^{\frac{-2}{n}}\triangle |\mathring{Ric}|\right)\\
=\frac{-2}{n-2}|\nabla |\mathring{Ric}|^{\frac{n-2}{n}}|^2+\frac{n-2}{n}
|\mathring{Ric}|^{\frac{-4}{n}}|\mathring{Ric}|\triangle |\mathring{Ric}|\\
\geq\frac{n-2}{n}\left(-\sqrt{\frac{n}{n-1}}|\mathring{Ric}|^{2\frac{n-2}{n}+1}+\frac {R}{n-1} |\mathring{Ric}|^{2\frac{n-2}{n}}-\sqrt{\frac{n-2}{2(n-1)}}|W||\mathring{Ric}|^{2\frac{n-2}{n}}\right),
\end{split}
\end{equation}
in the sense of distributions. From (3.2), we get
\begin{equation}\triangle |\mathring{Ric}|^{\frac{n-2}{n}}\geq\frac{n-2}{n}\left(-\sqrt{\frac{n}{n-1}}|\mathring{Ric}|^{\frac{n-2}{n}+1}+\frac {R}{n-1} |\mathring{Ric}|^{\frac{n-2}{n}}-\sqrt{\frac{n-2}{2(n-1)}}|W||\mathring{Ric}|^{\frac{n-2}{n}}\right).\end{equation}
Integrating (3.3) over $M$, we obtain
\begin{equation}\int_M\left(R-\sqrt{n(n-1)}|\mathring{Ric}| -\sqrt{\frac{(n-2)(n-1)}{2}}|W|\right)|\mathring{Ric}|^{\frac{n-2}{n}}\leq 0.\end{equation}

If the equality holds in (3.4), all inequalities leading to (3.1)
become equalities. Hence at every point, either $\mathring{Ric}$ is null, i.e., $M$ is Eninstein, or it has an eigenvalue of
multiplicity $(n-1)$ and another of multiplicity $1$. Since $M$ has harmonic curvature, and by the regularity result of DeTurck and Goldschmidt \cite{DG},
$M$ must be real analytic
in suitable (harmonic) local coordinates.

Suppose that the Ricci tensor has an eigenvalue of multiplicity $(n-1)$ and another of multiplicity $1$.
If the Ricci tensor is parallel, by the de Rham decomposition Theorem \cite{Dr},
$M$ is covered isometrically by the product of Einstein manifolds. We have $R=\sqrt{n(n-1)}|\mathring{Ric}|$. From (3.4), we get $W=0$, i.e. is conformally flat. Since $M$ has positive scalar curvature, then the only possibility is that $M$ is covered
isometrically by $\mathbb{S}^1\times \mathbb{S}^{n-1}$ with the product metric.

On the other hand, if the Ricci tensor is not parallel, by the  classification
result of Derdzi\'{n}ski (see Theorem 10 of \cite{D}, see also Theorem 3.2 of \cite{Ca}), this concludes the proof of Theorem 1.1.
\end{proof}

\begin{proof}[{\bf Proof of Theorem
1.2}]
By the Kato inequality $|\nabla \mathring{Rm}|^2\geq |\nabla |\mathring{Rm}||^2$ and Lemma 2.2, we get
\begin{equation}
\frac12\triangle|\mathring{Rm}|^2\geq |\nabla |\mathring{Rm}||^2-C_1(n)|\mathring{Rm}|^3+\frac{(n-2)R}{n(n-1)}|W|^2+\frac{R}{n-1}|\mathring{Rm}|^2
\end{equation}
in the sense of distributions.

For small $\epsilon>0$, define $\Omega_\epsilon=\{x\in M||\mathring{Rm}|\geq\epsilon\},$ and
\begin{equation*}f_\epsilon(x)=
\begin{cases}|\mathring{Rm}|(x) \quad x\in \Omega_\epsilon\\\epsilon \quad\quad \quad \ \   x\in M\setminus
\Omega_\epsilon.
\end{cases}
\end{equation*}
Multiplying both sides of (3.5) by $f^{-1}_\epsilon$ and then integrating over $M$, we obtain
\begin{equation}
\begin{split} 0&\geq-\frac12\int_M\triangle|\mathring{Rm}|^2f^{-1}_\epsilon+\int_M|\nabla |\mathring{Rm}||^2f^{-1}_\epsilon\\
&+\int_M [-C_1(n)|\mathring{Rm}|^3+\frac{(n-2)R}{n(n-1)}|W|^2+\frac{R}{n-1}|\mathring{Rm}|^2]f^{-1}_\epsilon\\
&=\frac12\int_M\langle\nabla|\mathring{Rm}|^2, \nabla f^{-1}_\epsilon\rangle+\int_M|\nabla |\mathring{Rm}||^2f^{-1}_\epsilon\\
&+\int_M [-C_1(n)|\mathring{Rm}|^3+\frac{(n-2)R}{n(n-1)}|W|^2+\frac{R}{n-1}|\mathring{Rm}|^2]f^{-1}_\epsilon\\
&=-\int_M\langle\nabla|\mathring{Rm}|, \nabla f_\epsilon\rangle |\mathring{Rm}|f^{-2}_\epsilon+\int_M|\nabla |\mathring{Rm}||^2f^{-1}_\epsilon\\
&+\int_M [-C_1(n)|\mathring{Rm}|^3+\frac{(n-2)R}{n(n-1)}|W|^2+\frac{R}{n-1}|\mathring{Rm}|^2]f^{-1}_\epsilon\\
&=-\int_M|\nabla f_\epsilon|^2f^{-1}_\epsilon+\int_M|\nabla |\mathring{Rm}||^2f^{-1}_\epsilon\\
&+\int_M [-C_1(n)|\mathring{Rm}|^3+\frac{(n-2)R}{n(n-1)}|W|^2+\frac{R}{n-1}|\mathring{Rm}|^2]f^{-1}_\epsilon\\
&=\int_{M\setminus
\Omega_\epsilon}|\nabla |\mathring{Rm}||^2f^{-1}_\epsilon+\int_M [-C_1(n)|\mathring{Rm}|^3+\frac{(n-2)R}{n(n-1)}|W|^2+\frac{R}{n-1}|\mathring{Rm}|^2]f^{-1}_\epsilon\\
&\geq\int_M [-C_1(n)|\mathring{Rm}|^3+\frac{(n-2)R}{n(n-1)}|W|^2+\frac{R}{n-1}|\mathring{Rm}|^2]f^{-1}_\epsilon\\
&\geq\int_M [-C_1(n)|\mathring{Rm}|^3+\frac{R}{n-1}|\mathring{Rm}|^2]f^{-1}_\epsilon.
\end{split}
\end{equation}
It follows from the proof of (3.6) that
$$\int_M [R-C_1(n)(n-1)|\mathring{Rm}|]|\mathring{Rm}|^2f^{-1}_\epsilon\leq0.$$
Now, taking the limit as $\epsilon\rightarrow 0$, we get form the above inequality
\begin{equation}\int_M [R-C_1(n)(n-1)|\mathring{Rm}|]|\mathring{Rm}|\leq0.\end{equation}

If the equality holds in (3.7), all inequalities leading to (3.5)
become equalities. Hence we get $W=0$, i.e., $M$ is a compact conformally
flat manifold with constant positive scalar
curvature. By (2.5) and (3.7), we have
\begin{equation}\int_M [R-C_1(n)(n-1)\sqrt{\frac{4}{n-2}}|\mathring{Ric}|]|\mathring{Ric}|=0.\end{equation}
Since $M$ is a compact conformally
flat manifold with constant positive scalar
curvature, based on (2.11), proceeding as in the proof of (3.7), we obtain
\begin{equation}\int_M [R-\sqrt{n(n-1)}|\mathring{Ric}|]|\mathring{Ric}|\leq0.\end{equation}
By comparing (3.8) with (3.9), we obtain $\mathring{Ric}=0$, i.e., $M$ is a Einstein manifold. Hence $M$ is isometric to a quotient of the round $\mathbb{S}^n$.
\end{proof}

\begin{proof}[{\bf Proof of corollary
1.3}] When $\mathring{Rm}<\frac{R}{(n-1)C_1(n)}$, by (2.10) and the maximum principle, we get $\mathring{Rm}=0$. Thus $M$ is isometric to a quotient of the round $\mathbb{S}^n$.

When $\mathring{Rm}=\frac{R}{(n-1)C_1(n)}$, by Theorem 1.2, we have that $M$ is isometric to a quotient of the round $\mathbb{S}^n$.
\end{proof}

\begin{proof}[{\bf Proof of Theorem
1.4}]
By (2.20) and Lemma 2.5, we get
\begin{equation}
\begin{split}
\frac 12\triangle|\mathring{Ric}|^2&\geq\frac{n+2}{n}|\nabla |\mathring{Ric}||^2-\sqrt{\frac {n-2}{2(n-1)}}\left(|W|^2+\frac{2n}{n-2}|\mathring{Ric}|^2\right)^{\frac 12}|\mathring{Ric}|^2
+\frac{R}{n-1}|\mathring{Ric}|^2.
\end{split}
\end{equation}
We rewrite (3.10) as
\begin{equation}
\begin{split}
|\mathring{Ric}|\triangle|\mathring{Ric}|&\geq\frac{2}{n}|\nabla |\mathring{Ric}||^2-\sqrt{\frac {n-2}{2(n-1)}}\left(|W|^2+\frac{2n}{n-2}|\mathring{Ric}|^2\right)^{\frac 12}|\mathring{Ric}|^2
+\frac{R}{n-1}|\mathring{Ric}|^2
\end{split}
\end{equation}
in the sense of distributions.

Set $u=|\mathring{Ric}|$.
By (3.11), we compute
\begin{equation}
\begin{split}
u^{\gamma}\triangle u^{\gamma}&=u^{\gamma}\left(\gamma(\gamma-1)u^{\gamma-2}|\nabla u|^2+\gamma u^{\gamma-1}\triangle u\right)\\
&=\frac{\gamma-1}{\gamma}|\nabla u^{\gamma}|^2+\gamma
u^{2\gamma-2}u\triangle u\\
&\geq(1-\frac{n-2}{n\gamma})|\nabla u^{\gamma}|^2-\sqrt{\frac {n-2}{2(n-1)}}\gamma\left(|W|^2+\frac{2n}{n-2}u^2\right)^{\frac 12}u^{2\gamma}+\frac{R}{n-1}\gamma u^{2\gamma}.
\end{split}
\end{equation}
Integrating (3.12) by parts over $M^n$, it follows that
\begin{equation}
\begin{split}
0\geq(2-\frac{n-2}{n\gamma})\int_M |\nabla u^{\gamma}|^2+\gamma\frac{R}{n-1}\int_M u^{2\gamma}
-\sqrt{\frac {n-2}{2(n-1)}}\gamma\int_M \left(|W|^2+\frac{2n}{n-2}u^2\right)^{\frac 12}u^{2\gamma}.
\end{split}
\end{equation}
For $2-\frac{n-2}{n\gamma}>0$, by the definition of Yamabe constant and (3.13), we get
\begin{equation}
\begin{split}
0\geq(2-\frac{n-2}{n\gamma})\frac{n-2}{4(n-1)}\mu([g])\left(\int_M  u^{\frac{2n\gamma}{n-2}}\right)^{\frac{n-2}{n}}-\sqrt{\frac {n-2}{2(n-1)}}\gamma\int_M \left(|W|^2+\frac{2n}{n-2}u^2\right)^{\frac 12}u^{2\gamma}\\
+\frac {4n\gamma+\frac{1}{\gamma}(n-2)^2-2n(n-2)}{4n(n-1)}R\int_M u^{2\gamma}.
\end{split}
\end{equation}
 By H\"{o}lder inequality,  we obtain
\begin{equation}
\begin{split}
0\geq\left[(2-\frac{n-2}{n\gamma})\frac{n-2}{4(n-1)}\mu([g])-\sqrt{\frac {n-2}{2(n-1)}}\gamma\left(\int_M \left(|W|^2+\frac{2n}{n-2}u^2\right)^{\frac n4}\right)^{\frac 2n}\right]
\left(\int_M  u^{\frac{2n\gamma}{n-2}}\right)^{\frac{n-2}{n}}\\
+\frac {4n\gamma+\frac{1}{\gamma}(n-2)^2-2n(n-2)}{4n(n-1)}R\int_M u^{2\gamma}.
\end{split}
\end{equation}

Taking $\gamma=\frac{(n-2)(1+\sqrt{1-\frac 4n})}{4}$. From (3.15), we get
\begin{equation}
0\geq\left[\frac{\sqrt{2}}{\sqrt{(n-2)(n-1)}}\mu([g])-\left(\int_M (|W|^2
+\frac{2n}{n-2}u^2)^{\frac n4}\right)^{\frac 2n}\right]
\left(\int_M  u^{\frac{2n\gamma}{n-2}}\right)^{\frac{n-2}{n}}.
\end{equation}
Since $W$ is totally trace-free, one has
$$|W+\frac{\sqrt{n}}{\sqrt{2}(n-2)}\mathring{Ric}\circledwedge g|^2=|W|^2+\frac{2n}{n-2}|\mathring{Ric}|^2$$
and the pinching condition (1.1) implies that $M$ is Einstein.

In particular, we choose $\gamma$ such that $\frac {4n\gamma+\frac{1}{\gamma}(n-2)^2-2n(n-2)}{4n(n-1)}>0$, and
\begin{equation*}
0\geq\left[\frac{2}{C_2(n)n}\mu([g])-\left(\int_M \left(|W|^2+\frac{2n}{n-2}u^2\right)^{\frac n4}\right)^{\frac 2n}\right]
\left(\int_M  u^{\frac{2n\gamma}{n-2}}\right)^{\frac{n-2}{n}}.
\end{equation*}
From the above, the pinching condition (1.1) implies that $M$ is Einstein.
Hence, the pinching condition (1.1) implies
\begin{equation}\left(\int_M |W|^{\frac n2}\right)^{\frac 2n}<\frac{2}{C_2(n)n}\mu([g]).\end{equation}
By the  rigidity result for positively curved Einstein manifolds (see Theorem 1.1 of \cite{FX3}),
(3.17) implies that $M$ is isometric to a quotient of the round $\mathbb{S}^n$.
\end{proof}

\begin{proof}[{\bf Proofs of corollary
1.6 and Remark 1.7}]To prove Corollary 1.6, we need the
following lower bound for the Yamabe invariant on  compact four-dimensional manifolds which was proved by M. J. Gursky (see \cite{{Ca2},{G}}):
\begin{equation*}\int_M R^2-12\int_M|\mathring{Ric}|^2\leq{\mu([g])}^2,\end{equation*}
the inequality is strict unless $(M^4, g)$ is conformally Einstein. From this inequality, we get
$$\int_M|W|^{2}+4\int_M|\mathring{Ric}|^2
-\frac{1}{3}{\mu([g])}^2\leq\int_M|W|^{2}+8\int_M|\mathring{Ric}|^2
-\frac{1}{3}\int_M R^2$$
and
$$\int_M|W|^{2}+4\int_M|\mathring{Ric}|^2
-\frac{1}{6}{\mu([g])}^2\leq\int_M|W|^{2}+6\int_M|\mathring{Ric}|^2
-\frac{1}{6}\int_M R^2$$
Moreover, the two inequalities are strict unless $(M^4, g)$ is conformally Einstein. In the first case ``$<$'',
Theorem 1.4 immediately implies Corollary 1.6. In the second case ``$=$'',  $g$ is conformally
Einstein. Since $g$ has constant scalar curvature, $g$ is Einstein from the proof of Obata Theorem (see \cite{LP}). Hence $\frac{1}{6}\int_M R^2=\frac{1}{6}{\mu([g])}^2$, by Corollary 1.16 in \cite{F}, we complete the proof of this corollary.

By the Chern-Gauss-Bonnet formula ( see Equation 6.31 of \cite{B})
$$\int_M|W|^{2}-2\int_M|\mathring{Ric}|^2+\frac{1}{6}\int_M R^2
=32{\pi}^2\chi(M),$$
the right-hand sides can be
written as
$$\int_M|W|^{2}+8\int_M|\mathring{Ric}|^2
-\frac{1}{3}\int_M R^2=5\int_M|W|^{2}+\frac {1}{3}\int_M R^2
-128{\pi}^2\chi(M)$$
and$$\int_M|W|^{2}+6\int_M|\mathring{Ric}|^2
-\frac{1}{6}\int_M R^2=2\int_M|\mathring{Rm}|^{2}
-32{\pi}^2\chi(M).$$
This proves Remark 1.7.
\end{proof}

\begin{proof}[{\bf Proof of Theorem
1.8}]By (1.4), all inequalities leading to (3.16)
become equalities. Hence $c(|W|^2+\frac{2n}{n-2}|\mathring{Ric}|^2)^{\frac n4}=|\mathring{Ric}|^{\frac{2n\gamma}{n-2}}$, where $c$ is constant, and at every point either $\mathring{Ric}$ is null, i.e., $M$ is Eninstein, or it has an eigenvalue of
multiplicity $(n-1)$ and another of multiplicity $1$ for $|\nabla \mathring{Ric}|^2=\frac{n+2}{n}|\nabla |\mathring{Ric}||^2$. Since $M$ has harmonic curvature,
$M$ must be real analytic
in suitable (harmonic) local coordinates. So we have $W_{kijl}\mathring{R}_{ij}\mathring{R}_{kl}=0$. From (2.20), we get
\begin{equation*}
\triangle|\mathring{Ric}|^2=2|\nabla \mathring{Ric}|^2+2\frac{n}{n-2}\mathring{R}_{ij}\mathring{R}_{jl}\mathring{R}_{li}
+2\frac{R}{n-1}|\mathring{Ric}|^2.
\end{equation*}
Based on the above equality, using the same argument as in the proof of (3.15), we can obtain that if $$\left(\int_M |\mathring{Ric}|^{\frac n2}\right)^{\frac 2n}<\frac {1}{\sqrt{n(n-1)}}\mu([g]),$$ then $M$ is Einstein.

Case 1 When $W\neq0$, (1.4) implies that $\left(\int_M |\mathring{Ric}|^{\frac n2}\right)^{\frac 2n}<\frac {1}{\sqrt{n(n-1)}}\mu([g])$. By the above result, $M$ is Einstein.

Case 2 When $W\equiv0$, $M$ is locally conformally flat. Suppose that the Ricci tensor has an eigenvalue of multiplicity $(n-1)$ and another of multiplicity $1$.
If the Ricci tensor is parallel, by the de Rham decomposition Theorem,
$M$ is covered isometrically by the product of Einstein manifolds. We have $R=\sqrt{n(n-1)}|\mathring{Ric}|$. Since $M$ has positive scalar curvature, then the only possibility is that $M$ is covered
isometrically by $\mathbb{S}^1\times \mathbb{S}^{n-1}$ with the product metric.
On the other hand, if the Ricci tensor is not parallel, by the  classification
result of Derdzi\'{n}ski , this concludes the proof of Theorem 1.8.
\end{proof}

\begin{proof}[{\bf Proof of Theorem
1.9}]Based on Lemma 2.4, using the same argument as in the proof of (3.15), we can obtain
\begin{equation}
\begin{split}
0\geq\left[(2-\frac{1}{\gamma})\frac{n-2}{4(n-1)}\mu([g])-C_2(n)\gamma\left(\int_M |W|^\frac{n}{2}\right)^\frac{2}{n}-2\sqrt{\frac{n-1}{n}}\gamma\left(\int_M|\mathring{Ric}|^\frac{n}{2}\right)^{\frac 2n}\right]
\left(\int_M  |W|^{\frac{2n\gamma}{n-2}}\right)^{\frac{n-2}{n}}\\
+\frac {8(n-1)\gamma+\frac{1}{\gamma}n(n-2)-2n(n-2)}{4n(n-1)}R\int_M |W|^{2\gamma}.
\end{split}
\end{equation}

Case 1. $4\leq n\leq9$, choose $\gamma=1$. It follows from (3.18) that
\begin{equation}
\begin{split}
0\geq\left[\frac{n-2}{4(n-1)}\mu([g])-C_2(n)\left(\int_M |W|^\frac{n}{2}\right)^\frac{2}{n}-2\sqrt{\frac{n-1}{n}}\left(\int_M|\mathring{Ric}|^{\frac{n}{2}}\right)^{\frac 2n}\right]
\left(\int_M  |W|^{\frac{2n}{n-2}}\right)^{\frac{n-2}{n}}\\
+\frac {10n-8-n^2}{4n(n-1)}R\int_M |W|^{2}.
\end{split}
\end{equation}
From (3.19), the pinching condition (1.5) implies that $M$ is locally conformal flat.Moreover,  we get the same conclusion if we assume just the weak inequality in (1.5).
Hence, the pinching condition (1.5) implies
\begin{equation}\left(\int_M |\mathring{Ric}|^{\frac n2}\right)^{\frac 2n}<\frac {(n-2)\sqrt{n}}{8(n-1)^{\frac 23}}\mu([g])<\frac {1}{\sqrt{n(n-1)}}\mu([g]).\end{equation}

Case 2. $n\geq10$, choose $\frac 1\gamma=1+\sqrt{1-\frac {8(n-1)}{(n-2)n}}$. From (3.18), we get
\begin{equation}
0\geq\left[\frac{2}{n}\mu([g])-C_2(n)\left(\int_M |W|^\frac{n}{2}\right)^\frac{2}{n}-2\sqrt{\frac{n-1}{n}}\left(\int_M|\mathring{Ric}|^{\frac{n}{2}}\right)^{\frac 2n}\right]
\left(\int_M  |W|^{\frac{2n\gamma}{n-2}}\right)^{\frac{n-2}{n}}.
\end{equation}
From (3.21), the pinching condition (1.5) implies that $M$ is locally conformal flat. Hence, the pinching condition (1.5) implies
\begin{equation}\left(\int_M |\mathring{Ric}|^{\frac n2}\right)^{\frac 2n}<\frac {1}{\sqrt{n(n-1)}}\mu([g]).\end{equation}

By the  rigidity result for locally conformal flat manifolds (see Theorem 1.13 of \cite{FX}), noting that the difference between the Yamabe constants in \cite{FX} and this paper is $\frac{4(n-1)}{n-2}$,
(3.20) and (3.22) imply that $M$ is isometric to a quotient of the round $\mathbb{S}^n$.
\end{proof}

\begin{proof}[{\bf Proof of Theorem
1.11}]By (1.6), all inequalities leading to (3.18)
become equalities. Hence at every point, $W=0$ or   either $\mathring{Ric}$ is null, i.e., $M$ is Eninstein, or it has an eigenvalue of
multiplicity $(n-1)$ and another of multiplicity $1$ for $W_{ijkl}W_{ijkh}\mathring{Ric}_{hl}=\sqrt{\frac{n-1}{n}}|\mathring{Ric}||W|^2$. By the same argument as in the proof of Theorem 1.8, we can obtain that if $$\left(\int_M |\mathring{Ric}|^{\frac n2}\right)^{\frac 2n}<\frac {1}{\sqrt{n(n-1)}}\mu([g]),$$ then $M$ is Einstein.

Case 1 When $W\neq0$, (1.6) implies that $\left(\int_M |\mathring{Ric}|^{\frac n2}\right)^{\frac 2n}<\frac {1}{\sqrt{n(n-1)}}\mu([g])$. By the above result, $M$ is Einstein.

Case 2 When $W\equiv0$, $M$ is locally conformally flat. (1.6) implies that (1.4) holds.
By Theorem 1.8 , this concludes the proof of Theorem 1.11.
\end{proof}

\bibliographystyle{amsplain}

\end{document}